\newtheorem{teo}{Theorem}[section]
\newtheorem{prop}{Proposition}[section]
\newtheorem{lemma}{Lemma}[section]
\newtheorem{assumption}{Assumption}[section]
\newtheorem{defi}{Definition}[section]
\newcommand{\be}{\begin{eqnarray*}}
\newcommand{\ben}{\begin{eqnarray}}
\newcommand{\ee}{\end{eqnarray*}}
\newcommand{\een}{\end{eqnarray}}
\newcommand{\al}{\begin{align*}}
\newcommand{\eal}{\end{align*}}
\newcommand{\aln}{\begin{align}}
\newcommand{\ealn}{\end{align}}
\let\phi\varphi
\begin{document}

\author{Ana Bela Cruzeiro$^*$ ~~~~
Carlos Oliveira$^\dagger$~~~~
Jean Claude Zambrini$^\ddagger$
\\\\$^*$\textit{\small GMFUL and Department of Mathematics of Instituto Superior T\'ecnico}\\\textit{\small Universidade de Lisboa, Av. Rovisco Pais, 1049-001 Lisboa, Portugal}
\\\textit{\small Email: ana.cruzeiro@tecnico.ulisboa.pt}\\
$^\dagger$\textit{\small CEMAPRE, ISEG - School of Economics and Management}\\
	\textit{\small Universidade de Lisboa, Rua do Quelhas 6, 1200-781 Lisboa, Portugal}\\
	\textit{\small GMFUL, Universidade de
	Lisboa, Campo Grande 016, Edif\'i­cio C6, PT-1749-016 Lisboa, Portugal}
\\
\textit{\small	Email: carlosoliveira@iseg.ulisboa.pt}\\
$^\ddagger$\textit{\small GMFUL and Faculdade de Ci\^encias}\\ \textit{\small Universidade de
	Lisboa, Campo Grande 016, Edif\'icio C6, PT-1749-016 Lisboa, Portugal}\\
\textit{\small	Email: jczambrini@fc.ul.pt}
}

\title{ Time-symmetric optimal stochastic control problems in space-time domains }

\maketitle

\begin{abstract}
\noindent
We present a pair of adjoint optimal control problems characterizing a class of time-symmetric stochastic processes defined on random time intervals. The associated PDEs are of free-boundary type.
The particularity of our approach is that it involves two adjoint optimal stopping times adapted to a pair of filtrations, the traditional increasing one and another, decreasing. They are the keys of the time symmetry of the construction, which can be regarded as a generalization of  "Schr\"odinger's problem " (1931-32) to space-time domains. The relation with the notion of "Hidden diffusions" is also described.

\mbox{}
\newline
{\bf Keywords.\/} 
Bernstein processes on random time intervals; stochastic optimal control; hidden diffusions; free boundary PDEs.
\vspace{2mm}

\mbox{}
\newline
{\bf AMS (2020) Subject Classifications.\/} Primary 93E24;
secondary 60H30; 60H10; 35R35.
\end{abstract}

%================================================

\section{Introduction}
The notion of Bernstein stochastic processes dates back to 1932 (see Bernstein \cite{bernstein1932liaisons}) and  followed from a probabilistic interpretation of a suggestion made by E. Schr\"odinger, one year
before (Schrodinger \cite{schrodinger1932theorie}). During decades this line of ideas attracted very little attention. In 1986 it was shown (Zambrini \cite{zambrini1986variational} and references therein) that, behind it, there is a quantum-like 
regularization method for classical dynamical systems but, in contrast with quantum theory, using well defined probability measures and appropriate path spaces.

More recently, the community of mass transportation theory adopted part of the resulting framework under the logo of ``Schr\"odinger's problem" \cite{leonard2014survey}. It allows, in particular,
to construct very efficient regularizations in numerical approaches to optimal transport problems of interest in imaging, natural  sciences and Economics (Cf., for instance Benamou et al. \cite{benamou2015iterative}, Carlier and Laborde \cite{carlier2018differential}, Di Marino and {Gerolin} \cite{di2019optimal} and Galichon \cite{galichon2018optimal}).

Schr\"odinger's original (one dimensional) problem was to construct random processes interpolating in an optimal way between two ``arbitrary" probability densities, associated
with the heat equation, but given at the boundaries of a fixed time interval $I$. This means in particular that the given future probability had, a priori, nothing to do with the traditional probabilistic interpretation of this parabolic equation.

The answer to this problem, suggested by Schr\"odinger himself, is a class of (``Bernstein") diffusions, generally time inhomogeneous but enjoying a time reversibility property more general than the one known by most probabilists. The probability density of those optimal diffusions has an (integrable) product form of a positive solution of the heat equation and a positive solution of a backward heat equation, both defined on the fixed time interval $I$, respectively with (positive) initial and final boundary conditions.

If we adopt the traditional terminology of Mathematical Physics calling ``Euclidean" any approach of quantum physics where Schr\"odinger's type of equations are replaced by parabolic ones, the probability density of Bernstein diffusions expresses nothing but the Euclidean version of Born's fundamental interpretation of the wave function or, more precisely, of the $L^2$-scalar product of the two wave functions.
After \cite{zambrini1986variational}, the program inspired by Schr\"odinger was developed in various directions, illustrating the generality of its starting idea, in no way limited to the elementary situation considered initially (Zambrini \cite{zambrini2015research}).

The fact that the time interval of Bernstein processes existence was fixed, in Schr\"odinger's problem, is not a necessary or even natural restriction of the method. A natural construction would be to define Bernstein processes in space-time domains. This is the aim of this paper, whose organization is the following.

Section 2 summarizes the original construction of a large class of Bernstein diffusions, on a given (deterministic) time interval. Their particularity is to solve simultaneously two It\^o's
stochastic differential equations, one with an initial boundary condition, the other with a final one. Their relation with the notion of "hidden diffusions" is also indicated.

Section 3 provides a characterization of Bernstein diffusions defined on random time intervals as solutions of two adjoint optimal control problems where pairs of random times and drifts should be optimalized. In terms of partial differential equations these problems are of free-boundary type.
In Section 4 viscosity solutions of the adjoint Hamilton-Jacobi-Bellman underlying our construction are described. Section 5 shows that the solutions of these two boundary value problems are unique. In addition, their relation with Schr\"odinger's original problem and the associated dynamics of Bernstein optimal drifts are given. The characterization of
the distributions of  the two adjoint optimal stopping times used in the construction is the subject of Section 6. It amounts to the construction of a forward and a backward martingale of the process. Section 7 is devoted to a one dimensional example and its discussion.

\section{Bernstein and hidden diffusions stochastic processes}\label{Bernstein stochastic process}
Let $Z\equiv\{Z_t\in\mathbb{R}^n:t\in I\equiv[-T/2,T/2]\}$ be a stochastic process defined on a filtered probability space $\left(\Omega,\Sigma_I,\{{\cal P}_t\}_{t\in I},\{{\cal F}_t\}_{t\in I},P\right)$, where $\{{\cal P}_t\}_{t\in I}$ and $\{{\cal F}_t\}_{t\in I}$ are, respectively, an increasing and a decreasing filtration for the process $Z_t$. 

We say that $Z$ is a Bernstein stochastic process \cite{bernstein1932liaisons} if, for any bounded measurable function $f$, 
\begin{equation}\label{local markov property}
E\left[f(Z_t)\vert {\cal P}_s\cup{\cal F}_u\right]=E\left[f(Z_t)\vert Z_s,Z_u\right],\quad \text{for all } s\leq t\leq u,~[s,u]\subset I.
\end{equation}
This is known today as the local or ``two-sided" Markov property and represents a ``reciprocity" property of the process $Z$ in time. We stress that (1) is weaker than the Markov property. The construction of a Bernstein process relies on the definition of its transition probability $Q\equiv Q(s,x,t,B,u,z)$ that verifies:

 (i) for all $x,z\in\mathbb{R}^n$ and $s< t<u$ in $I$, $B\to Q(s,x,t,B,u,z)$ is a probability measure in the Borel $\sigma-$algebra  ${\cal B}^n$ of $\mathbb{R}^n$;
 
  (ii) for a fixed $B\in {\cal B}^n$ and $s< t<u$ in $I$, $(x,z)\to Q(s,x,t,B,u,z)$ is measurable function;

  (iii) for all $B_1 , B_2 \in  {\cal B}^n$, and $s< t<u<r$ in $I$, 
  
  $$\int_{B_2} Q (s,x,t,B_1 ,u, w)~ Q(s,x,u, dw, r, z)=\int_{B_1} Q(s,x,t,dy,r,z)~ Q(t,y,u,B_2 ,r,z).$$
  \color{black}
  
  Additionally, it can be found in Jamison \cite{jamison1974reciprocal} a proof of the following theorem:
\begin{teo}  
Let $Q$ be a Bernstein transition probability and $m$ a probability measure on ${\cal B}^n\times {\cal B}^n$. Then, there is a unique probability measure
$P=P_m$, such that
\begin{itemize}
\item[(1)] the local Markov property defined in \eqref{local markov property} is satisfied;
\item[(2)] $P_m(Z_{-T/2}\in B_{-T/2},Z_{T/2}\in B_{T/2})=m(B_{-T/2}\times B_{T/2})$, for all $B_{-T/2},B_{T/2}\in {\cal B}^n$;
\item[(3)]$P_m(Z_t\in B\,\vert Z_s,Z_u)=Q(s,Z_s,t,B,u,Z_u)$, for all $-T/2<s\leq t\leq u<T/2$ and $B\in{\cal B}^n$;
\item[(4)] $P_m(Z_{-T/2}\in B_{-T/2},Z_{t_1}\in B_{t_1},\cdots,Z_{t_n}\in B_{t_n},Z_{T/2}\in B_{T/2})$
\begin{align*}
=&\int_{B_{-T/2}\times B_{T/2}}dm(x,z)\int_{B_{t_1}}Q(-T/2,x,t_1,d{y_1},T/2,z)\int_{B_{t_2}}\cdots\\
&\cdots\int_{B_{t_n}}Q(t_{n-1},y_{n-1},t_n,d{y_n},T/2,z).
\end{align*}
\end{itemize} 
\end{teo}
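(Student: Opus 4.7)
The proof rests on Kolmogorov's extension theorem applied to the candidate finite-dimensional distributions prescribed by (4), followed by a verification that the resulting measure satisfies (1)--(3). First, for each finite subset $-T/2 < t_1 < \cdots < t_n < T/2$ I would define a Borel probability measure $\mu_{t_1,\ldots,t_n}$ on $(\mathbb{R}^n)^{n+2}$ by declaring its value on product sets to be the right-hand side of (4). Properties (i) and (ii) of $Q$, together with $m$ being a probability measure on $\mathcal{B}^n \times \mathcal{B}^n$, ensure each $\mu_{t_1,\ldots,t_n}$ is well-defined, jointly measurable, and of total mass one. The crucial step is consistency under removal of an interior time $t_j$: taking (iii) with $B_1 = \mathbb{R}^n$, $B_2 = B$, and $(s,t,u,r) = (t_{j-1},t_j,t_{j+1},T/2)$ yields
\[
\int_{\mathbb{R}^n} Q(t_{j-1}, y_{j-1}, t_j, dy_j, T/2, z)\, Q(t_j, y_j, t_{j+1}, B, T/2, z) = Q(t_{j-1}, y_{j-1}, t_{j+1}, B, T/2, z),
\]
which is exactly the reciprocal Chapman--Kolmogorov identity needed to eliminate $t_j$ from (4); marginalization in the two boundary variables is immediate from (i). Kolmogorov's theorem then delivers a unique $P_m$ with these marginals, so (4) holds by construction and (2) is the $n=0$ case.

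Property (3) would follow by taking $n=1$ in (4) with intermediate time $t$: the joint laws of $(Z_s,Z_t,Z_u)$ and of $(Z_s,Z_u)$ (the latter after one application of the identity above) have explicit forms in terms of $Q$ and $m$, and their ratio is $Q(s,Z_s,t,\cdot,u,Z_u)$ almost surely. For (1), fix $s \le t \le u$ and cylinder events $A$ involving sample times $\le s$ (generating $\mathcal{P}_s$) and $C$ involving sample times $\ge u$ (generating $\mathcal{F}_u$). The product structure of the integrand in (4) lets one split $E[f(Z_t)\mathbf{1}_A\mathbf{1}_C]$ into a past factor depending on variables $\le s$ and the two boundary endpoints $(x,z)$, a future factor depending on variables $\ge u$ and $(x,z)$, and the single Bernstein kernel $Q(s,Z_s,t,\cdot,u,Z_u)$ carrying all dependence on $Z_t$; pulling this kernel out of the combined past--future integral yields (1) by a $\pi$--$\lambda$ argument.

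Uniqueness is automatic: any probability measure on $\Omega$ satisfying (1)--(3) reconstructs, via (3) and iterated use of (iii), the finite-dimensional distributions in (4), and therefore coincides with $P_m$ on the generating cylinder sets and hence on $\Sigma_I$. The main obstacle I anticipate is the bookkeeping in the proof of (1): one must carefully justify the past--future factorization conditional on $(Z_s,Z_u)$, which requires repeatedly applying (iii) to collapse the chain of kernels $Q(\cdot,\cdot,\cdot,\cdot,T/2,z)$ on each side of $[s,u]$ so that the terminal argument $z$ can be separated from the interior times. The consistency verification, though conceptually at the heart of the construction, reduces to the one-line use of (iii) above.
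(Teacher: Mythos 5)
The paper does not prove this theorem; it states it and refers the reader to Jamison's 1974 paper for the proof. Your argument is a faithful reconstruction of Jamison's approach: build the finite-dimensional distributions from formula (4), check Kolmogorov consistency using property (iii) of $Q$, and then verify (1)--(3) for the resulting extension. The consistency identity you display — obtained from (iii) with $B_1 = \mathbb{R}^n$ — is indeed the right one for deleting an interior time $t_j$ with $j < n$ (deleting $t_n$ or an endpoint is trivial since $Q$ is a probability measure in its Borel argument), so the Kolmogorov step is sound, and your treatment of uniqueness and of (2) as the $n=0$ case is correct.

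One small imprecision worth flagging: for property (3) you say the joint law of $(Z_s, Z_u)$ follows from "one application of the identity above," but the display you gave is the marginalization identity (the case $B_1 = \mathbb{R}^n$). What (3) actually requires is a \emph{different} instance of (iii), with both $B_1$ and $B_2$ nontrivial, applied with $(s,t,u,r) = (s,t,u,T/2)$: this converts the chain $\int_{B_t} Q(s,y_s,t,dy_t,T/2,z)\,Q(t,y_t,u,B_u,T/2,z)$ into $\int_{B_u} Q(s,y_s,t,B_t,u,w)\,Q(s,y_s,u,dw,T/2,z)$, which exposes $Q(s,\cdot,t,\cdot,u,\cdot)$ as the conditional kernel given $(Z_s,Z_u)$. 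The same re-factoring, iterated on both sides of $[s,u]$, is exactly what underlies the past--future splitting you correctly identify as the main bookkeeping burden in proving (1). These are clarifications rather than gaps; your outline matches the cited proof in structure and method.
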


Let $V:\mathbb R^3 \rightarrow \mathbb R$ be a bounded below potential and $\hbar$ be a positive constant such that the integral kernel $h(s,x,t,y)=\left(e^{-\frac{H}{\hbar}(t-s)}\right)(x,y)$, defined on $L^2(\mathbb{R}^n)$, is positive and jointly continuous in $x,y\in\mathbb{R}^n$, where $H$ is a parabolic operator of the form $H=\frac{-\hbar^2}{2} \Delta +V$. Then an appropriate density of Bernstein transition probability can take the form (Cf. \cite{zambrini1986variational}),
$$
Q(s,x,t,dy,u,z)=\frac{h(s,x,t,y)h(t,y,u,z)}{h(s,x,u,z)}.
$$
According to Jamison \cite{jamison1974reciprocal}, there exists a single joint probability measure $m$ that turns $Z$ into a Markov process, given by
$$
m(B_{-T/2}\times B_{T/2})=\int_{B_{-T/2}\times B_{T/2}}\eta^*_{-T/2}(x)h(-T/2,x,T/2,y)\eta_{T/2}(y)dxdy,
$$
where $\eta^*_{-T/2},\eta_{T/2}:\mathbb{R}^n\to\mathbb{R}$ are two arbitrary integrable measurable positive functions. These functions are the unique solutions of a
system of integral equations
$$
\begin{cases}
\eta^*_{-T/2}(x)\int_{\mathbb{R}^n}h(-T/2,x,T/2,z)\eta_{T/2}(z)dz=p_{-T/2}(x)\\
\eta_{T/2}(z)\int_{\mathbb{R}^n}\eta^*_{-T/2}(x)h(-T/2,x,T/2,z)dx=p_{T/2}(z),
\end{cases}
$$ 
for $p_{-T/2}$ and $p_{T/2}$ a given pair of (strictly positive) boundary probability densities. The unique solvability of the above non-linear system was  shown in Beurling \cite{beurling1960automorphism}.  Finally, if $\rho(t,x)$is the density of the process at time $t$,  the probability of the process $Z_t$ being in $B\in{\cal B}^n$ is of the form
$$
P(Z_t\in B)=\int_{B}\rho(t,x)dx =\int_{B} \eta^*(t,x)\eta(t,x)dx
$$
where
\begin{align*}
\eta(t,x)=\int h(t,x,T/2,z)\eta_{T/2}(z)dz\quad , \quad\eta^*(t,x)=\int \eta^*_{-T/2}(y)h^*(-T/2,y,-t,x)dy
\end{align*}
 and $h^*$ is, more generally, the integral kernel of $e^{-(t+T/2 )H^* }$. For $H$ as before $H^* =H$.
One can prove that, in this case, the functions $\eta^*_{t}$ and $\eta_{t}$ are two positive solutions of the initial and terminal problems on $[-\frac{T}{2},\frac{T}{2}]$,
\begin{equation}\label{PDEs}
\begin{cases}
-h\frac{\partial\eta^*}{\partial t}=H\eta^*\\
\eta^*(-T/2,x)=\eta_{-T/2}^*(x)
\end{cases}\quad\text{and}\quad\quad
\begin{cases}
h\frac{\partial\eta}{\partial t}=H\eta\\
\eta(T/2,x)=\eta_{T/2}(x)
\end{cases}.
\end{equation}
This construction was done initially in Zambrini \cite{zambrini1986variational}. For a more rigorous version cf. \cite{cruzeiro1991malliavin}.

Afterwards we are going to focus on this Markovian framework. Let us stress, however, that there are interesting non-Markovian Bernstein processes (Vuillermot and Zambrini \cite{vuillermot2019bernstein}).

Let $B^Z$ and $C^Z$ (resp. $B^Z_*$ and $C^Z_*$) be the forward drift and diffusion coefficient or ``volatility" (resp., backward drift and coefficient) associated with the Bernstein process $Z$ and defined by
\begin{align}
&B^Z(s,x)=\lim_{t\downarrow s}\frac{1}{t-s}\int_{{\mathbb{R}}^3}(y-x)Q(s,x,t,dy,u,z),\\ 
& B^Z_*(u,z)=\lim_{t\uparrow u}\frac{1}{u-t}\int_{{\mathbb{R}}^3}(z-y)Q(s,x,t,dy,u,z)
\end{align}
and 
\begin{align}
&C^Z(s,x)=\lim_{t\downarrow s}\frac{1}{t-s}\int_{{\mathbb{R}}^3}\vert y-x\vert^2 Q(s,x,t,dy,u,z),\\ 
& C^Z_*(u,z)=\lim_{t\uparrow u}\frac{1}{u-t}\int_{{\mathbb{R}}^3}\vert z-y\vert^2 Q(s,x,t,dy,u,z).
\end{align}
For a Markov $3$-d, for instance,  Bernstein process with $H$ defined as before, these functions take the form
\begin{align*}
&B^Z(s,x)=\hbar\nabla\log\eta(s,x),\quad B^Z_*(s,x)=-\hbar\nabla\log\eta^*(s,x)\\
&~~~~~~~~~~~~~~~~~~C^Z(s,x)=C^Z_*(s,x)=\hbar I_{3\times 3},
\end{align*}
where $I_{3\times 3}$ is the identity matrix of dimension 3. For smooth drifts, the Markov Bernstein process solve the forward and backward SDE's:
\begin{align}\label{diffusion Z}
dZ_t&=B^Z(t,Z_t)dt+\hbar^{1/2}dW_t\quad\text{and}\quad
d_*Z_t=B^Z_*(t,Z_t)d_*t+\hbar^{1/2}d_*W^*_t,
\end{align}
{where $W$ represents a Brownian motion adapted to the past filtration and} $W^*$ denotes a Brownian motion adapted to the future filtration. Additionally, it is straightforward to observe that 
\begin{align}\label{relationship between drifts}
B^Z_*(t,x)=B^Z(t,x)-\hbar \nabla\log\rho(t,x).
\end{align}

For sufficiently smooth functions the operators $\cal L$ and $\cal L^*$, defined by
$$
{\cal L}=\partial_t+B^Z\cdot\nabla+\frac \hbar 2\Delta\quad\text{and}\quad 
{\cal L}^*=\partial_t+B_*^Z\cdot\nabla-\frac \hbar 2\Delta,
$$ 
coincide, respectively, with the forward and backward infinitesimal generators of the process $Z$: 
\begin{align*}
&({\cal L}v)(t,x)=\lim_{\bigtriangleup t\downarrow 0}E_{t,x}\left[\frac{v(t+\bigtriangleup t,Z(t+\bigtriangleup t))-v(t,Z(t))}{\bigtriangleup t}\right]\\
&({\cal L}^*v)(t,x)=\lim_{\bigtriangleup t\downarrow 0}E_{t,x}\left[\frac{v(t,Z(t))-v(t-\bigtriangleup t,Z(t-\bigtriangleup t))}{\bigtriangleup t}\right],
\end{align*}
where $E_{t,x}[ \cdot ]$ represents the expected value conditioned on the information that $Z_{t}=x$.

Let us stress that ${\cal L}$ and ${\cal L}^*$ involve indeed the same forward and backward increments as in SDEs \eqref{diffusion Z}.

In a quantum-like context, Bernstein processes are usually seen as critical points of forward and backward action functionals, cf. Cruzeiro and Zambrini \cite{cruzeiro1991malliavin} or Zambrini \cite{zambrini2015research}, among others. In fact, one may see that $\eta(t,x)=e^{-\frac{1}{h}F(t,x)}$ and $\eta^*(t,x)=e^{-\frac{1}{h}F^*(t,x)}$ where $F$ and $F^*$  can be obtained as solutions (or ``value functions") of an optimal control problem. In light of the results derived by Fleming and Soner \cite{fleming2006controlled}, one may state the following result:
\begin{prop}
	Let $F(t,x)$ and $F^*(t,x)$ be classical solutions of Hamilton-Jacobi-Bellman equations 
\begin{align*}
&\begin{cases}
\frac{\partial F}{\partial t}-\frac{1}{2}\vert\nabla F\vert^2+\frac{\hbar}{2}\Delta F+V(x)=0,&\\
F\left(\frac{T}{2},x\right)=F_{T/2}(x),&
\end{cases}t<T/2\text{ and }x\in\mathbb{R}^n\\
&\begin{cases}
\frac{\partial F^*}{\partial t}+\frac{1}{2}\vert\nabla F^*\vert^2-\frac{\hbar}{2}\Delta F^*-V(x)=0\\
F^*\left(-\frac{T}{2},x\right)={F_{-T/2}^*(x)},& 
\end{cases}t>-T/2\text{ and }x\in\mathbb{R}^n.
\end{align*}
Then, if $Z$  solves eqs.\eqref{diffusion Z}, it holds that
\begin{align*}
&F(t,x)= E_{t,x}\left[\int_{t}^{T/2} \left(\frac{1}{2}\vert B^Z(s,Z_s)\vert^2 + V(Z_s) \right)ds+{F_{T/2}(Z_{T/2})}\right]\\
&F^*(t,x)=E_{t,x}\left[\int_{-T/2}^{t} \left(\frac{1}{2}\vert B^Z_*(s,Z_s)\vert^2 + V(Z_s) \right)ds+{F_{-T/2}^*(Z_{-T/2})}\right].
\end{align*}
\end{prop}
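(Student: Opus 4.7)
The plan is a verification argument combining Itô calculus with the Hopf--Cole-type identifications made just before the statement. The crucial observation is that the exponential representations $\eta=e^{-F/\hbar}$ and $\eta^{*}=e^{-F^{*}/\hbar}$ together with the formulas $B^{Z}=\hbar\nabla\log\eta$ and $B^{Z}_{*}=-\hbar\nabla\log\eta^{*}$ yield
\[
B^{Z}(t,x)=-\nabla F(t,x),\qquad B^{Z}_{*}(t,x)=\nabla F^{*}(t,x),
\]
so that the quadratic terms $-\tfrac12|\nabla F|^{2}$ and $+\tfrac12|\nabla F^{*}|^{2}$ in the two HJB equations are exactly $-\tfrac12|B^{Z}|^{2}$ and $+\tfrac12|B^{Z}_{*}|^{2}$. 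This is what couples the control-theoretic equation to the stochastic one.

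For the forward statement, I would apply Itô's formula to $s\mapsto F(s,Z_{s})$ between $t$ and $T/2$, using the first SDE in \eqref{diffusion Z}. This produces the drift $\partial_{s}F+B^{Z}\!\cdot\!\nabla F+\tfrac{\hbar}{2}\Delta F$ plus the martingale $\hbar^{1/2}\nabla F\cdot dW_{s}$. Substituting $B^{Z}=-\nabla F$ the drift becomes $\partial_{s}F-|\nabla F|^{2}+\tfrac{\hbar}{2}\Delta F$, and replacing $\partial_{s}F+\tfrac{\hbar}{2}\Delta F$ by $\tfrac12|\nabla F|^{2}-V$ from the first HJB equation collapses it to $-\tfrac12|\nabla F|^{2}-V=-\bigl(\tfrac12|B^{Z}|^{2}+V\bigr)$. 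Taking $E_{t,x}$, using the terminal condition $F(T/2,\cdot)=F_{T/2}$, and discarding the stochastic integral yields the claimed representation of $F$.

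The argument for $F^{*}$ is structurally identical but uses backward Itô calculus on the second SDE in \eqref{diffusion Z}, integrated from $-T/2$ to $t$. With $B^{Z}_{*}=\nabla F^{*}$, the backward drift of $F^{*}(s,Z_{s})$ becomes $\partial_{s}F^{*}+|\nabla F^{*}|^{2}-\tfrac{\hbar}{2}\Delta F^{*}$, and the second HJB equation turns this into $+\bigl(\tfrac12|B^{Z}_{*}|^{2}+V\bigr)$. Conditioning on $Z_{t}=x$ and applying the initial condition $F^{*}(-T/2,\cdot)=F^{*}_{-T/2}$ then gives the second formula. The symmetry between the two computations is what makes the whole scheme time-reversible.

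The main technical obstacle I anticipate is the honest vanishing of the two stochastic integrals $\int\nabla F\cdot dW$ and $\int\nabla F^{*}\cdot d_{*}W^{*}$. For \emph{classical} solutions with $V$ bounded below, this requires a local-martingale-plus-localization argument: I would stop at $\tau_{N}=\inf\{s:|Z_{s}|\geq N\}\wedge T/2$, apply Itô on $[t,\tau_{N}]$, use Fleming--Soner-type estimates (linear growth of $\nabla F$ and the non-explosion of $Z$ inherited from $\eta>0$ and the integrability of $\rho=\eta^{*}\eta$) to pass to the limit $N\to\infty$ by dominated convergence. The backward version needs the analogous localization for the decreasing filtration, which is standard once one recognizes $d_{*}W^{*}$ as a genuine backward Brownian increment. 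Everything else is bookkeeping along the lines of Fleming and Soner \cite{fleming2006controlled}.
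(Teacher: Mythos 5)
Your verification argument is correct and is exactly the Itô/HJB verification argument that the paper implicitly invokes when it cites Fleming--Soner without writing out a proof. The key identifications $B^{Z}=-\nabla F$ and $B^{Z}_{*}=\nabla F^{*}$ (which follow from $\eta=e^{-F/\hbar}$, $\eta^{*}=e^{-F^{*}/\hbar}$ together with $B^{Z}=\hbar\nabla\log\eta$, $B^{Z}_{*}=-\hbar\nabla\log\eta^{*}$), the substitution of the two HJB equations to collapse the drift of $F(s,Z_{s})$ and $F^{*}(s,Z_{s})$ to $\mp\bigl(\tfrac12|B^{Z}|^{2}+V\bigr)$ and $\pm\bigl(\tfrac12|B^{Z}_{*}|^{2}+V\bigr)$, and the localization needed to make the forward and backward stochastic integrals honest martingales are all handled correctly and match the intended (referenced) proof.
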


Following this approach, originated in Schr\"odinger \cite{schrodinger1932theorie}, the Bernstein process $Z$ is, by construction, well-defined in the domain $[-T/2,T/2]\times\mathbb{R}^n$. But, can we construct a stochastic process satisfying the above time reversibility property, which is not necessarily defined in $[-T/2,T/2]\times\mathbb{R}^n$, but  in a time varying domain contained in $[-T/2,T/2]\times\mathbb{R}^n$? Indeed, one may generalize the concept of Bernstein stochastic processes by using  results derived for ``Hidden diffusions" (Cf. Choy and Nam \cite{choi2004interpolation}). In the context of filtering methods for hidden diffusions, time reversal of the underlying stochastic processes plays an import role as shown in Kim \cite{geun2001filterlng}. In our case the time reversibility is given by the construction of the Bernstein processes itself.

Without taking into account the preceding construction, let us only assume that $Z$ satisfies the forward and backward SDEs \eqref{diffusion Z} where $B^Z(t,x)$ is bounded and uniformly Lipschitz continuous for $(t,x)\in [-T/2, T/2]\times\mathbb{R}^n$, $B^Z_*(t,x)$ satisfies \eqref{relationship between drifts} and $\rho(t,x)$ is the solution of the {forward} Kolmogorov equation for the process $Z_t$. An auxiliary process $Y_t$ taking values in $\mathbb{R}^n\cup\text{"hidden"}$  is defined as
$$
Y_t=\begin{cases}
Z_t,& \text{if } Z_t\notin A\\
"hidden",& \text{if } Z_t\in A
\end{cases}
$$
where $A$ is a Borel set in $[-T/2, T/2]\times\mathbb{R}^n$. If $Y_{[-T/2,T/2]}$ represents the $\sigma-$algebra $\sigma\{Y_t:\,-T/2\leq t\leq T/2\}$, $\tau_A=\inf\{u>t:\,(u,Z_u)\notin A\}$ and $\tau^*_A=\sup\{s<t:\,(s,Z_s)\notin A\}$\footnote{$\tau_A$ is a ${\cal P}_t$-stopping time and $\tau_A^*$ is a ${\cal F}_t$-stopping time.} then it is straightforward to see that
$$
E\left[f(Z_t)\,\vert Y_{[-T/2,T/2]}\right]=E_{t,x}\left[f(Z_t)\,\vert {\cal P}_{\tau_A}\cup {\cal F}_{\tau^*_A}\right].
$$
Combining the decomposition 
\begin{align*}
E\left[Z_t\,\vert {\cal P}_{\tau_A}\cup {\cal F}_{\tau^*_A}\right]&=E\left[Z_t{\bf 1}_{Z_t\notin A}\,\vert {\cal P}_{\tau_A}\cup {\cal F}_{\tau^*_A}\right]+E\left[Z_t{\bf 1}_{Z_t\in A}\,\vert {\cal P}_{\tau_A}\cup {\cal F}_{\tau^*_A}\right]
\end{align*}
with the strong Markov property and Theorem 4 in Choy and Nam \cite{choi2004interpolation}, one may state the following result:
\begin{teo}
	For any bounded Borel function $f$, we have
\begin{align*}
E\left[f(Z_t)\,\vert Y_{[-T/2,T/2]}\right]=E\left[f(Z_t)\,\vert {\cal P}_{\tau_A}\cup {\cal F}_{\tau^*_A}\right]=E[f(Z_t)\,\vert \tau_A,Z_{\tau_A}, \tau_A^*,Z_{\tau_A^*}].
\end{align*}
\end{teo}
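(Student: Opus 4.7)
The plan is to establish the two equalities in turn. The first equality is essentially a measurability statement, already indicated in the excerpt as straightforward: observing $Y$ over the whole interval reveals the path of $Z$ outside the Borel set $A$ together with the sequence of entry/exit times at $A$. When $(t,Z_t)\in A$, the random times $\tau_A^*$ and $\tau_A$ are respectively the last and the first times adjacent to $t$ at which $Z$ is visible, so the $\sigma$-algebra generated by $Y_{[-T/2,T/2]}$ coincides with ${\cal P}_{\tau_A}\cup{\cal F}_{\tau^*_A}$ (the information about the $Z$-path outside the currently hidden excursion). A monotone-class / bounded-convergence argument then promotes this measurability identification to equality of the conditional expectations for every bounded Borel $f$.

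For the second equality I would exploit the decomposition given just above the statement,
\begin{align*}
E\!\left[f(Z_t)\,\vert {\cal P}_{\tau_A}\cup {\cal F}_{\tau^*_A}\right]
=E\!\left[f(Z_t){\bf 1}_{Z_t\notin A}\,\vert {\cal P}_{\tau_A}\cup {\cal F}_{\tau^*_A}\right]
+E\!\left[f(Z_t){\bf 1}_{Z_t\in A}\,\vert {\cal P}_{\tau_A}\cup {\cal F}_{\tau^*_A}\right],
\end{align*}
treating the two pieces separately. On the event $\{(t,Z_t)\notin A\}$ the very definitions of $\tau_A$ and $\tau_A^*$ combined with sample-path continuity force $Z_t$ to agree with one of the endpoint values $Z_{\tau_A}$ or $Z_{\tau_A^*}$, so the first term is already a function of the four-tuple $(\tau_A,Z_{\tau_A},\tau_A^*,Z_{\tau_A^*})$. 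For the second term, where the excursion is genuinely hidden, I would apply the strong forward Markov property at $\tau_A$ and the strong backward Markov property at $\tau_A^*$ -- both available because $\tau_A$ is a ${\cal P}_t$-stopping time and $\tau_A^*$ is an ${\cal F}_t$-stopping time -- to collapse the full two-sided information on ${\cal P}_{\tau_A}\cup{\cal F}_{\tau^*_A}$ down to the four endpoint variables. Theorem~4 of Choy and Nam \cite{choi2004interpolation} packages exactly this reduction for hidden diffusions, and I would invoke it to close the argument.

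The main technical hurdle is upgrading the local (reciprocal) Markov property \eqref{local markov property}, stated for deterministic $s\le t\le u$, to the adjoint pair of stopping times $(\tau_A^*,\tau_A)$ living on two \emph{different} filtrations. I would approximate $\tau_A$ from above and $\tau_A^*$ from below by discrete (dyadic) stopping times $\tau_A^{(n)}\downarrow\tau_A$ and $\tau_A^{*,(n)}\uparrow\tau_A^*$, apply \eqref{local markov property} on each discrete triple, and pass to the limit using continuity of the sample paths of $Z$, boundedness of $f$ and dominated convergence for conditional expectations. Once this strong reciprocal property is in hand on the adjoint stopping-time pair, the decomposition above plus the two boundary identifications on $\{Z_t\notin A\}$ and $\{Z_t\in A\}$ deliver the claimed chain of equalities.
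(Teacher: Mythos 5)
Your proposal follows the same route the paper itself uses, which is little more than the sketch preceding the theorem: split on $\{Z_t\in A\}$ and $\{Z_t\notin A\}$, invoke the (forward and backward) strong Markov property at the two adjoint stopping times, and delegate the final reduction to Theorem~4 of Choy and Nam \cite{choi2004interpolation}. In that sense the core of your argument is compatible with the paper.

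Two points are worth flagging. First, your opening paragraph asserts that $\sigma\bigl(Y_{[-T/2,T/2]}\bigr)$ \emph{coincides} with $\mathcal{P}_{\tau_A}\cup\mathcal{F}_{\tau_A^*}$. This is too strong: with the standard reading of the filtrations, $\mathcal{P}_{\tau_A}$ (resp.\ $\mathcal{F}_{\tau_A^*}$) carries the full path of $Z$, hidden excursions included, up to $\tau_A$ (resp.\ from $\tau_A^*$ onward), whereas $Y$ only sees $Z$ outside $A$. The $\sigma$-algebras therefore do \emph{not} agree; what the theorem claims is equality of the \emph{conditional expectations} of $f(Z_t)$, and that equality is precisely what the reciprocal (two-sided) Markov property delivers — it cannot be deduced from a measurability identification and then upgraded by a monotone-class argument, because there is nothing to upgrade from. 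Second, your discretization paragraph is not wrong, but it duplicates work: the whole purpose of citing Theorem~4 of Choy and Nam is that it already packages the stopping-time version of the reciprocal property for hidden diffusions, so the dyadic approximation $\tau_A^{(n)}\downarrow\tau_A$, $\tau_A^{*,(n)}\uparrow\tau_A^*$ is superfluous once that citation is in place; if you wished to avoid the citation, the discretization argument would need to be carried out in detail (including the joint passage to the limit for the two adjoint filtrations), which is nontrivial.
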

This property generalizes indeed  the local Markov property \eqref{local markov property} since one may consider the particular domain
$$
A=\left((s,u)\times\mathbb{R}^n\right)\cup(]-\infty,s]\cup[u,+\infty[\times\{\infty\}).
$$
For this particular case, it is straightforward to recover  equality (1):
$$E\left[f(Z_t)\,\vert {\cal P}_{s}\cup {\cal F}_{u}\right]=E[f(Z_t)\,\vert Z_{s}, Z_{u}].$$

Let $B$ be a Borel set contained in $A$ and $\tilde{Q}$ be the transition probability of $Z$ on the event $\{Z_t\in A\}$ that is defined as
 $$\tilde Q(\tau_A,Z_{\tau_A},t,B,\tau^*_A,Z_{\tau^*_A})=P(Z_t\in B\,\vert \tau_A,Z_{\tau_A}, \tau_A^*,Z_{\tau_A^*}).$$ 
According to Choy and Nam \cite{choi2004interpolation}, the conditional density is given by
$$
\tilde Q(s,x,t,dy,u,z)=\frac{\tilde h(s,x,t,y,u,z)}{\int_A \tilde h(s,x,t,y,u,z)dy},
$$
where $\tilde h(s,x,t,y,u,z)$ represents the joint density of $(\tau_A,Z_{\tau_A},Z_t,\tau^*_A,Z_{\tau^*_A})$. Additionally, $\tilde h(s,x,t,y,u,z)$ admits the following decomposition 
$$
\tilde h(s,x,t,y,u,z)=L_\tau(s,x,t,y)\rho(t,y)L_{\tau^*}(t,y,u,z),
$$
where $L_{\tau_A}$ (resp., $L_{\tau^*_A}$) is the joint density of $(\tau_A,Z_{\tau_A}, Z_t)$ (resp., $(\tau^*_A,Z_{\tau^*_A}, Z_t)$). {Furthermore, in light of Lemma 2, in Choy and Nam \cite{choi2004interpolation}, and Lemma 3.2, in Kim and Nam \cite{kim2001optimal}, the density functions} $\Psi_{s,x}\equiv\Psi_{s,x}(t,y)=L_{\tau_A}(s,x,t,y)$ and $\Psi^*(t,y)\equiv\Psi^*_{u,z}(t,y)=L_{\tau^*_A}(t,y,u,z)$) are the solutions to the boundary problems:
\begin{align*}
\begin{cases}
\frac{\partial\Psi_{s,x}}{\partial t}+B^Z\cdot\nabla\Psi_{s,x}+\frac{\hbar}{2}\Delta\Psi_{s,x}=0,&(t,y)\in A\\
\Psi(t,y)=\delta_{(s,x)}(t,y),&(t,y)\in \partial A\\
\Psi(T/2,y)=0,&(T/2,y)\in A
\end{cases}
\end{align*}
\text{and}
\begin{align*}
\begin{cases}
\frac{\partial\Psi^*}{\partial t}+B^Z_*\cdot\nabla\Psi^*-\frac{\hbar}{2}\Delta\Psi^*=0,&(t,y)\in A\\
\Psi^*(t,y)=\delta_{(u,z)}(t,y),&(t,y)\in \partial A\\
\Psi^*(-T/2,y)=0,&(-T/2,y)\in A
\end{cases}.
\end{align*}

%Through this paper, we will work with functionals of the Brownian motion, which requires for instance an appropriate integration by parts formula. To present this, we introduce the space $\Omega_0=\{\omega\in C([0,T/2];\mathbb{R}^n):\omega(0)=0\}$ and the Cameron-Martin space,
%$$
%{\cal H}=\left\{\phi\in\Omega_0:\phi\text{ exists and}\int_0^{T/2}\vert\phi\vert^2ds<+\infty\right\},
%$$   
%which is, in fact, an Hilbert space when we add the inner product $(\cdot\vert\cdot)$, defined by $(\phi_1\vert\phi_2)=\int_{0}^{T/2}\phi_1(s)\vert\phi_2(s)ds$.
%
%
%\begin{equation}\label{diffusion}
%dZ_t=b(Z_t)dW_t+h^{1/2}dW_t
%\end{equation}

\section{Stochastic optimal control problems}
{In this section, we will argue that Bernstein stochastic processes may be introduced in light of stochastic control and optimal stopping theories. 	To do this, one has to introduce the corresponding action functionals and the control diffusions.} 

Let $\cal U$ (resp., ${\cal U}^*$) be the set of functions $b$ (resp. $b^*$) such that the processes $b(s,Z_s)$ (resp. $b^*(s,Z_s)$) are progressively measurable processes valued in a compact metric separable %Let $\cal U$ (resp., ${\cal U}^*$) be the set of all progressively measurable processes valued in a compact metric separable 
space $M$ (resp., ${ M}^*$), with respect to the increasing filtration $\{{\cal P}_t\}_{t\in I}$ (resp., decreasing $\{{\cal F}_t\}_{t\in I}$) and ${\cal T}_{t}$ (resp., ${\cal T}^*_{t}$\color{black}) be the set of all stopping times adapted to the  filtration $\{{\cal P}_t\}_{t\in I}$ (resp.,$\{{\cal F}_t\}_{t\in I}$) that are greater (resp., less) then or equal to $t$.

Consider the action functionals $J_{t,x}$ and $J^*_{t,x}$ defined by
\begin{align}\label{forward J}
	J_{t,x}(Z;\tau,b)&=E_{t,x}\left[\int_{t}^{\tau\wedge T/2} \left(\frac{1}{2}\vert b(s,Z_s)\vert^2 + V(Z_s) \right)ds+S(Z_{\tau\wedge T/2})\right],\\
	dZ_u&=b(u,X_u)du+\hbar^{1/2}dW_u ,\quad Z_t=x\text{ and }-\frac{T}{2}\leq t\leq u\leq \frac{T}{2}\label{forward process}
\end{align}
and 
\begin{align}\label{backward J}
	J^*_{t,x}(Z;\tau^*,b^*)&=E_{t,x}\left[\int_{-T/2\vee\tau^*}^{t} \left(\frac{1}{2}\vert b^*(s,Z_s)\vert^2 + V(Z_s) \right)ds+S^*(Z_{-T/2\vee\tau^*})\right],\\
 d_*Z_s&=b^*(s,Z_s)d_*s+\hbar^{1/2}d_*W^*_s ,\quad Z_t=x\text{ and }-\frac{T}{2}\leq s\leq t\leq \frac{T}{2}\label{backward process}
\end{align}
where $\tau \in {\cal T}_{t}$, $\tau^*  \in {\cal T}^*_{t}$, $S$ is the terminal  {boundary} condition in the functional $J_{t,x}$ and $S^*$ is the initial  {boundary} condition in the functional $J^*_{t,x}$.  {In this context, $d_*$ should be understood as the backward differential used above in ${\cal L}^*$.} $J$ is usually called the forward action function with terminal condition and $J^*$ the backward action functional with initial condition. The stochastic optimal control problems consist in finding $\hat\tau$ and $\hat b$ (resp., $\hat\tau^*$ and $\hat b^*$) that minimize the function $J$ (resp., $J^*$). Equivalently, we can look for  the ``value functions" ${U}$ and $U^*$ given by
\begin{align}
&U(t,x)=\inf_{(b,\tau)\in{\cal U}\times{\cal T}_{t}} J_{t,x}(Z;\tau,b)\,{=J_{t,x}(Z;\hat\tau,\hat b)}\label{optimal U}\\
&U^*(t,x)=\inf_{(b^*,\tau^*)\in{\cal U}^*\times{\cal T}^*_{t}} J^*_{t,x}(Z;\tau^*,b^*)\,{=J^*_{t,x}(Z;\hat \tau^*,\hat b^*)}\label{optimal U*}.
\end{align}

Throughout the paper, we will refer to the set $${\cal C}=\{(t,x)\in[-T/2,T/2)\times\mathbb{R}^n:U(t,x)<S(x)\}$$
as  forward continuation region and as forward stopping region, the set $${\cal S}=\{(t,x)\in[-T/2,T/2]\times\mathbb{R}^n:(t,x)\notin {\cal C}\}.$$ Additionally, the backward continuation and stopping regions will be denoted, respectively, by \begin{align*}
&{\cal C}^*=\{(t,x)\in(-T/2,T/2]\times\mathbb{R}^n:U^*(t,x)<S^*(x)\} \text{ and } \\
&{\cal S}^*=\{(t,x)\in[-T/2,T/2]\times\mathbb{R}^n:(t,x)\notin {\cal C}^*\}.
\end{align*}
Henceforward, we assume that the functions $V, S$ and $S^*$ satisfy the next assumption.
\begin{assumption}\label{regularity of V and S}
	Let the potential $V$, the terminal condition $S$ and the initial condition $S^*$ be such that
	\begin{itemize}
\item[1)] $V$, $S$ and $S^*$ are Lipschitz continuous;
\item[2)] $V$ is a lower-bounded function and $S$ and $S^*$ are such that
 $\{S(X_\tau)\}_{\tau\in {\cal T}_{t}}$\text{ and }$\{S^*(X_\tau)\}_{\tau \in {\cal T}^*_{t}}$
\color{black} \text{ are two uniformly integrable families of random variables.}	\end{itemize}

\end{assumption}
Taking into account  Assumption \ref{regularity of V and S}, one can easily prove that the functions $(x,t)\to U(x,t)$ and $(x,t)\to U^*(x,t)$ are continuous. A proof of the next result, for the forward version $U$, can be found in Pham \cite{pham1998optimal}. Additionally, the continuity of $U^*$ can be obtained by using the same type of arguments.
\begin{prop}
	Consider the value functions $U$ and $U^*$ defined as in \eqref{optimal U} and \eqref{optimal U*}. Then, $U^*\in C^0([-T/2,T/2]\times\mathbb{R}^n)$ and $U\in C^0([-T/2,T/2]\times\mathbb{R}^n)$. Additionally, $x\to U(t,x)$ and $x\to U^*(t,x)$ are Lipschitz continuous, uniformly in $t$.
\end{prop}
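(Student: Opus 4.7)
The plan is to establish Lipschitz continuity of $x \mapsto U(t,x)$ uniformly in $t$ via a coupling argument, then derive continuity in $t$ from the dynamic programming principle, and finally transpose the whole argument to $U^*$ by time reversal. Joint continuity on $[-T/2, T/2] \times \mathbb{R}^n$ then follows by combining the two estimates.

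For the spatial Lipschitz estimate, fix $t \in [-T/2, T/2]$, $x, x' \in \mathbb{R}^n$ and $\varepsilon > 0$, and pick $(b_\varepsilon, \tau_\varepsilon) \in \mathcal{U} \times \mathcal{T}_t$ that is $\varepsilon$-optimal for $U(t,x)$. Let $Z^{t,x}$ denote the corresponding solution of \eqref{forward process} and introduce the open-loop control $\tilde b_s := b_\varepsilon(s, Z_s^{t,x})$, which is $\mathcal{P}_t$-progressively measurable with values in the compact set $M$ and hence admissible. The SDE driven by the same Brownian motion and by $\tilde b$ but started from $x'$ has pathwise solution $Z_s^{t,x'} = Z_s^{t,x} + (x' - x)$ for all $s \in [t, T/2]$. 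Since $\tau_\varepsilon$ is $\mathcal{P}_t$-adapted it remains admissible for the problem started at $x'$, the running costs $|\tilde b|^2 = |b_\varepsilon(\cdot, Z_\cdot^{t,x})|^2$ coincide in the two action functionals, and the Lipschitz hypotheses on $V$ and $S$ (with constants $L_V, L_S$) yield
\begin{equation*}
U(t, x') - \varepsilon \leq J_{t, x'}(Z^{t, x'}; \tau_\varepsilon, \tilde b) \leq J_{t, x}(Z^{t, x}; \tau_\varepsilon, b_\varepsilon) + (L_V T + L_S)\, |x - x'|.
\end{equation*}
Letting $\varepsilon \to 0$ and exchanging the roles of $x$ and $x'$ gives the Lipschitz bound with a constant independent of $t$.

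For continuity in time, I would appeal to the dynamic programming principle: for $t < t' \leq T/2$,
\begin{equation*}
U(t, x) = \inf_{(b, \tau) \in \mathcal{U} \times \mathcal{T}_t} E_{t, x}\!\left[ \int_t^{\tau \wedge t'}\!\left( \tfrac{1}{2}|b_s|^2 + V(Z_s) \right) ds + \mathbf{1}_{\{\tau \leq t'\}} S(Z_\tau) + \mathbf{1}_{\{\tau > t'\}} U(t', Z_{t'}) \right].
\end{equation*}
Restricting to $\tau \geq t'$ and a constant control and using the spatial Lipschitz bound together with the moment estimate $E|Z_{t'} - x| \leq C(t' - t)^{1/2}$ (which holds because $|b| \leq \mathrm{diam}(M)$), one obtains $U(t, x) - U(t', x) \leq C_1 (t' - t) + C_2 (t' - t)^{1/2}$. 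The reverse inequality follows symmetrically by taking an $\varepsilon$-optimal pair for $U(t', x)$ and extending it to $[t, t']$ by any constant element of $M$. Together with the preceding paragraph, this yields joint continuity of $U$ on $[-T/2, T/2] \times \mathbb{R}^n$.

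The argument for $U^*$ is entirely symmetric: replace \eqref{forward process} by the backward SDE \eqref{backward process}, $\mathcal{T}_t$ by $\mathcal{T}_t^*$, the increasing filtration $\{\mathcal{P}_t\}$ by the decreasing $\{\mathcal{F}_t\}$, and the forward DPP by its backward analogue, so no new ideas are needed. The main technical obstacle throughout is the admissibility check for the open-loop control $\tilde b$: this is immediate once one notes that $\tilde b$ is of the form $b(\cdot, Z_\cdot^{t,x})$ with $b$ measurable and $M$-valued, exactly as required in the definition of $\mathcal{U}$. The DPP itself and the detailed proof of the forward statement are the content of Pham \cite{pham1998optimal}, to which the proposition explicitly refers.
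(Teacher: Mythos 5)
Your spatial Lipschitz estimate is correct: the coupling $Z^{t,x'}_s = Z^{t,x}_s + (x'-x)$ is the standard device, and the worry you raise about whether $\tilde b_s = b_\varepsilon(s, Z^{t,x}_s)$ lies in $\mathcal{U}$ (which the paper defines via deterministic feedback maps $b(s,\cdot)$, not open-loop processes) is harmless: the translated feedback control $b'(s,y) := b_\varepsilon(s, y-(x'-x))$ is admissible, yields the same coupled paths, and gives the same estimate $|U(t,x)-U(t,x')|\leq (L_V T + L_S)|x-x'|$. The first half of the time-continuity argument, namely taking $\tau > t'$ and a constant control in the DPP to get $U(t,x) - U(t',x) \leq C_1(t'-t) + C_2(t'-t)^{1/2}$, is also sound (with $C_1$ depending on $|V(x)|$, which is fine for pointwise continuity). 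The paper gives no proof and refers to Pham, so the coupling-plus-DPP strategy is indeed the expected one.

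The gap is in the ``reverse inequality.'' Gluing a constant control on $[t,t']$ to an $\varepsilon$-optimal pair for $U(t',x)$ produces an admissible strategy for the problem started at $(t,x)$, and therefore yields an \emph{upper} bound on $U(t,x)$; this re-derives the inequality $U(t,x)-U(t',x)\lesssim (t'-t)^{1/2}$ you already have, not the one you still need, which is a \emph{lower} bound on $U(t,x)$. The correct step extracts the lower bound from the DPP: take an $\varepsilon$-optimal $(b,\tau)$ for $U(t,x)$ in the identity you wrote, so that $U(t,x)+\varepsilon \geq E_{t,x}\bigl[\int_t^{\tau\wedge t'}(\tfrac12|b_s|^2+V(Z_s))\,ds + \mathbf{1}_{\{\tau < t'\}}S(Z_\tau) + \mathbf{1}_{\{\tau\geq t'\}}U(t',Z_{t'})\bigr]$; then observe $S(z)\geq U(t',z)$ for every $z$ (take $\tau=t'$ in the definition of $U(t',z)$), so the stopped and continued contributions combine into $E_{t,x}[U(t',Z_{\tau\wedge t'})]$; bound the running cost below by $-c(t'-t)$ using that $V$ is bounded below (Assumption~3.1); and conclude with the spatial Lipschitz bound on $U(t',\cdot)$ and the moment estimate $E_{t,x}|Z_{\tau\wedge t'}-x|\lesssim (t'-t)^{1/2}$. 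Your ``symmetric'' construction never produces this lower bound, so as written the proposal does not establish $U(t',x)-U(t,x)\lesssim (t'-t)^{1/2}$.
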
  
In the next section, we will need to notice that, in light of 2) in Assumption \ref{regularity of V and S},
 $U$ and $U^*$ satisfy the following property

\begin{align}\label{uniformly integrability}
&\{U(\tau,Z_\tau)\}_{\tau\in {\cal T}_{t}}\text{ and }\{U^*(\tau^*,Z_{\tau^*})\}_{\tau^*\in {\cal T}^*_{t}}\text{ are two uniformly integrable}\\\nonumber
&~~~~~~~~~~~~~~~~~~~~~~~ \text{ families of random variables.}
\end{align}
To prove this, one should observe that for a single random variable uniformly integrability means that its expected value is finite. Additionally, taking 
into account that
\begin{align*}
-\infty<U(t,x)\leq S(x)\quad\text{and}\quad -\infty<U^*(t,x)\leq S^*(x),
\end{align*}
the first inequality following from Assumption \ref{regularity of V and S}, we deduce  that
\begin{align*}
&\int_{t}^{\hat{\tau}} \frac{1}{2}\vert b(s,Z_s)\vert^2 + V(Z_s) ds+S(Z_{\hat{\tau}})\\
&\int_{\hat{\tau}^*}^{t} \frac{1}{2}\vert b^*(s,Z_s)\vert^2 + V(Z_s) ds+S^*(Z_{\hat{\tau}^*})
\end{align*}
are two uniformly integrable random variables. In addition, this is equivalent to say that there is a uniformly integrable test function $f:[0,\infty)\to[0,\infty)$, (see Definition C.2 and Theorem C.3 in {\O}ksendal \cite{oksendal2003stochastic}) 
such that  
\begin{align*}
&E_{t,x}\left[f\left(\left\vert\int_{t}^{\hat{\tau}} \frac{1}{2}\vert b(s,Z_s)\vert^2 + V(Z_s) ds+S(Z_{\hat{\tau}})\right\vert\right)\right]<\infty\\
&E_{t,x}\left[f\left(\left\vert\int_{\hat{\tau}^*}^{t} \frac{1}{2}\vert b^*(s,Z_s)\vert^2 + V(Z_s) ds+S^*(Z_{\hat{\tau}^*})\right\vert\right)\right]<\infty
\end{align*}
\begin{prop} 
Let $U$ and $U^*$ be defined as in equation \eqref{optimal U} and \eqref{optimal U*}. Then $U$ and $U^*$ {satisfy property \eqref{uniformly integrability}.}
\end{prop}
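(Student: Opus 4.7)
The plan is to establish a deterministic constant $C$ (depending only on $L,\,K,\,\hbar,\,T,\,n$) such that
\begin{equation*}
|U(\tau,Z_\tau)-S(Z_\tau)|\le C\quad\text{for every }\tau\in\mathcal{T}_t,
\end{equation*}
together with an analogous bound for $U^*$. Uniform integrability of $\{U(\tau,Z_\tau)\}_{\tau\in\mathcal{T}_t}$ then follows by domination from the assumed uniform integrability of $\{S(Z_\tau)\}_{\tau\in\mathcal{T}_t}$ in Assumption~\ref{regularity of V and S}, since adding a deterministic constant preserves uniform integrability.

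The upper bound is immediate: the admissible choice $\sigma=\tau$ in the infimum defining $U(\tau,Z_\tau)$ reduces the cost functional to $S(Z_\tau)$, so $U(\tau,Z_\tau)\le S(Z_\tau)$. For the matching lower bound, let $L$ denote the Lipschitz constant of $S$ and $K$ a lower bound for $V$, both provided by Assumption~\ref{regularity of V and S}. I would apply Young's inequality $\tfrac{1}{2}|b|^2\ge L|b|-\tfrac{L^2}{2}$, combine it with the identity $\int_\tau^{\sigma\wedge T/2}b\,ds=(Z_{\sigma\wedge T/2}-Z_\tau)-\hbar^{1/2}(W_{\sigma\wedge T/2}-W_\tau)$ coming from \eqref{forward process}, and use $\int|b|\,ds\ge\bigl|\int b\,ds\bigr|$, in order to lower-bound $\int\tfrac{1}{2}|b|^2\,ds$ by a linear combination of $|Z_{\sigma\wedge T/2}-Z_\tau|$ and a Brownian-increment term. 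The Lipschitz estimate $S(Z_{\sigma\wedge T/2})\ge S(Z_\tau)-L|Z_{\sigma\wedge T/2}-Z_\tau|$ then makes the displacement terms cancel exactly, and the inequality $V\ge-K$ absorbs the potential contribution into $-KT$. Optional sampling applied to the martingale $|W_t|^2-nt$ together with conditional Jensen yields $E[|W_{\sigma\wedge T/2}-W_\tau|\mid\mathcal{P}_\tau]\le\sqrt{nT}$. Putting these estimates together produces a uniform bound $J_{\tau,Z_\tau}(Z;b,\sigma)\ge S(Z_\tau)-C$ for every admissible pair $(b,\sigma)$, and taking the infimum over $(b,\sigma)$ yields $U(\tau,Z_\tau)\ge S(Z_\tau)-C$.

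Combining the two inequalities gives $|U(\tau,Z_\tau)|\le|S(Z_\tau)|+C$, and domination by a UI family plus a constant proves the claim for $U$. The argument for $U^*$ is the time-reversed mirror image, carried out with the backward SDE \eqref{backward process} and the decreasing filtration $\{\mathcal{F}_t\}_{t\in I}$, with $S^*$ playing the role of $S$. The main delicate point is matching the constant in Young's inequality to the Lipschitz constant of $S$: this is precisely what forces the $L|Z_{\sigma\wedge T/2}-Z_\tau|$ terms to cancel exactly and yields a bound depending only on the structural constants, rather than on any a priori moment control of $Z_\tau$.
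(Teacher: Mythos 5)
Your proof is correct, but it takes a genuinely different route from the paper's. The paper's argument is shorter and more abstract: it takes a convex, increasing de la Vall\'ee-Poussin test function $f$ witnessing the uniform integrability of the optimal payoff $\int_t^{\hat\tau}(\tfrac12|b|^2+V)\,ds + S(Z_{\hat\tau})$ (whose integrability is established in the comments preceding the proposition), then pushes $f$ to the family $\{U(\tau,Z_\tau)\}$ via the strong Markov property and conditional Jensen, giving $E_{t,x}[f(|U(\tau,Z_\tau)|)]\le E_{t,x}[f(|\text{optimal payoff}|)]<\infty$ uniformly in $\tau$. Your argument instead produces an almost-sure deterministic sandwich $S(Z_\tau)-C\le U(\tau,Z_\tau)\le S(Z_\tau)$, with $C$ built from the Lipschitz constant of $S$, the lower bound of $V$, $T$, $\hbar$ and $n$; UI then follows by domination from the assumed UI of $\{S(Z_\tau)\}$. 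The key idea — tuning the Young inequality $\tfrac12|b|^2\ge L|b|-\tfrac{L^2}{2}$ exactly to the Lipschitz constant $L$ of $S$ so that the $L|Z_{\sigma\wedge T/2}-Z_\tau|$ terms from the running cost and from the Lipschitz bound on $S(Z_{\sigma\wedge T/2})$ cancel — is clean and gives a quantitative bound that the paper's proof does not. It is worth noting that your argument is also more self-contained: it does not lean on the existence of the optimizers $(\hat b,\hat\tau)$ or on identifying $U(\tau,Z_\tau)$ with a conditional expectation of the tail cost under the optimal pair, both of which are used implicitly in the paper's short proof. The price you pay is having to invoke optional sampling for $|W_t|^2-nt$ and a few more lines of estimates, but this is elementary. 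Both arguments work equally well in the time-reversed setting for $U^*$.
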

\begin{proof}
	Pick a uniform integrability test function $f:[0,\infty)\to[0,\infty)$, which is increasing and convex, and notice that
	\begin{align}
	\nonumber E_{t,x}&\left[f\left(\left\vert U(\tau,Z_\tau)\right\vert\right)\right]\leq E_{t,x}\left[f\left(\left\vert\int_{t}^{\hat{\tau}} \frac{1}{2}\vert b(s,Z_s)\vert^2 + V(Z_s) ds+S(Z_{\hat{\tau}})\right\vert\right)\right]<\infty,
	\end{align}
	for any $\tau\in{\cal T}_t$, where the first inequality follows from the strong Markov property and  Jensen's inequality, while the second inequality follows from the comments above. 
\end{proof}

In this paper, our main goal is to show that  stochastic processes obtained as solutions of such stochastic control problems will be  Bernstein diffusions. To do this, in the next sections we will show that the value functions $U$ and $U^*$ are viscosity solutions for the correspondent HJB equations.

\section{Dynamic programming principle and viscosity solutions}\label{Dynamic programming principle and viscosity solutions}
The relationship between stochastic control problems and PDE's is well known: usually the value function can be recovered as a solution to a suitable HJB equation (see, for instance Fleming and Soner \cite{fleming2006controlled}). As, often, the value function associated with the control problem is not $C^2$,  we will prove in this section that the value functions $U$ and $U^*$ are, respectively, viscosity solutions to the HJB equations
\begin{align*}
& %\max\left\{\max_{b\in{\cal U}}\left\{-{\cal L} v(t,x)-\frac{h}{2}\Delta v-\frac{1}{2}\vert b(t,x)\vert^2-V(x) \right\},v-S(x)\right\}
\max\{{\cal H}^b(t,x,v,\partial_t v,\nabla v,\Delta v),v-S(x)\}=0, \text{ in } [-T/2,T/2)\times\mathbb{R}^n\\
& %\max\left\{\max_{b\in{\cal U^*}}\left\{{\cal L}^*v^*-\frac{h}{2}\Delta v^*-\frac{1}{2}\vert b^*(t,x)\vert^2-V(x)\right\},v^*-S^*(x)\right\}
\max\{{\cal H}_*^{b^*}(t,x,v^*,\partial_t v^*,\nabla v^*,\Delta v^*),v^*-S^*(x)\}=0, \text{ in } (-T/2,T/2]\times\mathbb{R}^n,
\end{align*}
where   {``generalized Hamiltonians''} can be defined here by
\begin{align*}
{\cal H}^b(t,x,v,\partial_t,\nabla v,\Delta v)&=\max_{b\in{\cal U}}\left\{-\frac{\partial v}{\partial t}-b\cdot\nabla v-\frac{\hbar}{2}\Delta v-\frac{1}{2}\vert b\vert^2-V(x) \right\}\\
&=-\frac{\partial v}{\partial t}+\frac{1}{2}\vert \nabla v\vert^2 -\frac{\hbar}{2}\Delta v-V(x)
\\
{\cal H}^{b^*}_*(t,x,v^*,\partial_t,\nabla v^*,\Delta v^*)&=\max_{b\in{\cal U^*}}\left\{\frac{\partial v^*}{\partial t}\textcolor{red}{+}b^*\cdot\nabla v^*-\frac{\hbar}{2}\Delta v^*-\frac{1}{2}\vert b^*\vert^2-V(x)\right\}\\
&=\frac{\partial v^*}{\partial t}+\frac{1}{2}\vert\nabla v^*\vert^2-\frac{\hbar}{2}\Delta v^*-V(x)
\end{align*}
In this case, the minimizing controls would be given by
\begin{equation}\label{optimal b}
	\hat b(t,x)=-\nabla U(t,x)\quad\text{and}\quad \hat b^*(t,x)=\nabla U^*(t,x),
\end{equation}
and the HJB equations reduce to
\begin{align}\label{HJB v}
&\max\left\{-\frac{\partial v}{\partial t}+\frac{1}{2}\vert\nabla v\vert^2-\frac{\hbar}{2}\Delta v-V(x),v-S(x)\right\}=0\\\label{HJB v*}
&\max\left\{\frac{\partial v^*}{\partial t}+\frac{1}{2}\vert\nabla v^*\vert^2-\frac{\hbar}{2}\Delta v^*-V(x),v^*-S^* (x)\right\}=0.
\end{align}
Additionally, $v$ and $v^*$ satisfy  terminal and initial conditions
\begin{equation}\label{boundary condition}
v\left(\frac{T}{2},x\right)=S(x)\quad\text{and}\quad v^*\left(-\frac{T}{2},x\right)=S^*(x), \forall x\in\mathbb{R}^n.
\end{equation} 
\begin{defi}\label{Definition Viscosity Solution}
	Consider a locally bounded function $v:[-T/2,T/2)\times\mathbb{R}^n\to\mathbb{R}$. Then, $v$ is a
	\begin{itemize}
		\item[(a)] viscosity subsolution to \eqref{HJB v} if whenever $\psi\in C^2([-T/2,T/2)\times\mathbb{R}^n)$ and ${v}-\psi$ has a local maximum at $(t,x)\in [-T/2,T/2)\times\mathbb{R}^n$, such that ${v}(t,x)=\psi(t,x)$, then
		\begin{align*}
		&\max\left\{-\frac{\partial \psi}{\partial t}+\frac{1}{2}\vert\nabla \psi\vert^2-\frac{\hbar}{2}\Delta \psi-V(x),v-S(x)\right\}\leq 0;
		\end{align*}
		\item[(b)] viscosity supersolution to \eqref{HJB v} if whenever $\psi\in C^2([-T/2,T/2)\times\mathbb{R}^n)$ and ${v}-\psi$ has a local minimum at $(t,x)\in [-T/2,T/2)\times\mathbb{R}^n$, such that ${v}(t,x)=\psi(t,x)$, then
		\begin{align*}
		&\max\left\{-\frac{\partial \psi}{\partial t}+\frac{1}{2}\vert\nabla \psi\vert^2-\frac{\hbar}{2}\Delta \psi-V(x),v-S(x)\right\}\geq 0;
		\end{align*}
		\item[(c)] viscosity solution to \eqref{HJB v} if it is simultaneously a viscosity subsolution and a viscosity supersolution to \eqref{HJB v}. 
	\end{itemize}
A viscosity solution for the HJB equation \eqref{HJB v*} can be defined in the same way.
\end{defi} 
To reach the main result of this section, one needs to state a suitable Bellman principle for the control problems \eqref{optimal U} and \eqref{optimal U*}. For further details about Bellman's principle for these control problems, we can refer to Krylov \cite{krylov2008controlled} (see also Pham \cite{pham1998optimal}). 

Fixing $\epsilon>0$, $b\in{\cal U}$ and $b^*\in{\cal U}^*$,  one can define the following two stopping times:
\begin{align}
\tau_{t,x,b,\epsilon}=\inf\{t\leq s\leq T/2\,:U(s,Z_s)\geq S(Z_s)-\epsilon\}\quad(\hbox{a}~ {\cal P}_t-\text{stopping time})\\
{\tau^*}_{t,x,b^*,\epsilon}=\sup\{t\leq s\leq T/2\,:U^*(s,Z_s)\geq S^*(Z_s)-\epsilon\}\quad(\hbox{a} ~ {\cal F}_t-\text{stopping time}).
\end{align}
\begin{prop}
Let $(t,x)\in[-T/2,T/2]\times\mathbb{R}^n$ and $\epsilon>0$. Then, if $\tau_b\leq\tau_{t,x,b,\epsilon}$, for all $b\in{\cal U}$,
\begin{equation*}
U(t,x)=\inf_{b\in{\cal U}}E_{t,x}\left[\int_t^{\tau_b} \frac{1}{2}\vert b(s,Z_s)\vert^2+V(Z_s) ds+U(\tau_b,Z_{\tau_b})\right].
\end{equation*}
Similarly, if $(t,x)\in[-T/2,T/2]\times\mathbb{R}^n$, $\epsilon>0$ and $\tau^*_{b^*}\geq\tau^*_{t,x,b^*,\epsilon}$, then
\begin{align*}
U^*(t,x)=\inf_{b^*\in{\cal U}^*}E_{t,x}\Bigg[\int_{\tau^*_{b^*}}^t  \frac{1}{2}\vert b^*(s,Z_s)\vert^2+V(Z_s)ds+U^*(\tau^*_{b^*},Z_{\tau^*_{b^*}})\Bigg].
\end{align*}
\end{prop}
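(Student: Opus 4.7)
I interpret the statement as a restricted one-step dynamic programming principle, and write $G(t,x)$ for the right-hand side of the asserted identity. The plan is to prove the two inequalities $U(t,x) \le G(t,x)$ and $U(t,x) \ge G(t,x)$ separately; the symmetric statement for $U^*$ follows by the analogous argument built on the backward SDE \eqref{backward process} and the decreasing filtration $\{{\cal F}_t\}$.

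For $U \le G$ I would use a pasting argument. Fix $b \in {\cal U}$ and $\eta > 0$. Using the continuity of $U$ from Proposition 3.1 together with a standard measurable selection theorem, one picks, for each realisation of $(\tau_b, Z_{\tau_b})$, an $\eta$-optimal pair $(\tilde b, \tilde\tau)$ for the problem initiated at that data in a way measurable with respect to ${\cal P}_{\tau_b}$. Define the concatenated control
$$
b'(s,\omega) = b(s,\omega)\mathbf{1}_{\{s < \tau_b(\omega)\}} + \tilde b(s,\omega)\mathbf{1}_{\{s \ge \tau_b(\omega)\}},
$$
which is progressively measurable with respect to $\{{\cal P}_t\}$, and take $\tilde\tau$ as its stopping time. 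By the strong Markov property of the controlled diffusion \eqref{forward process},
$$
U(t,x) \le J_{t,x}(Z;\tilde\tau, b') = E_{t,x}\!\left[\int_t^{\tau_b} \left(\tfrac12|b(s,Z_s)|^2 + V(Z_s)\right) ds + J_{\tau_b, Z_{\tau_b}}(Z;\tilde\tau,\tilde b)\right] \le E_{t,x}\!\left[\int_t^{\tau_b} \left(\tfrac12|b|^2 + V\right) ds + U(\tau_b, Z_{\tau_b})\right] + \eta.
$$
Sending $\eta \to 0$ and taking the infimum over $b \in {\cal U}$ yields $U(t,x) \le G(t,x)$.

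For $U \ge G$ I would, for each $\delta > 0$, select a $\delta$-optimal pair $(b^\delta, \tau^\delta)$ for $U(t,x)$. The structural observation that makes the restriction $\tau_b \le \tau_{t,x,b,\epsilon}$ tractable is that by definition of $\tau_{\cdot,\epsilon}$ one has $U(s,Z_s) < S(Z_s) - \epsilon$ on $[t, \tau_{t,x,b^\delta,\epsilon})$, so stopping strictly inside this set is at least $\epsilon$-worse than continuing. Consequently, for $\delta$ sufficiently small one may arrange $\tau^\delta \ge \tau_{b^\delta}$ (replacing $\tau^\delta$ by $\tau^\delta \vee \tau_{b^\delta}$ if necessary, in the spirit of Pham \cite{pham1998optimal}). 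Splitting at $\tau_{b^\delta}$ and applying the strong Markov property,
$$
J_{t,x}(Z;\tau^\delta, b^\delta) = E_{t,x}\!\left[\int_t^{\tau_{b^\delta}} \left(\tfrac12|b^\delta|^2 + V\right) ds + E_{\tau_{b^\delta}, Z_{\tau_{b^\delta}}}\!\left[\int_{\tau_{b^\delta}}^{\tau^\delta}\left(\tfrac12|b^\delta|^2 + V\right) ds + S(Z_{\tau^\delta})\right]\right].
$$
The inner conditional expectation is the cost of an admissible strategy for the control problem started at $(\tau_{b^\delta}, Z_{\tau_{b^\delta}})$ and is therefore at least $U(\tau_{b^\delta}, Z_{\tau_{b^\delta}})$ by the infimum defining $U$. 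Hence
$$
U(t,x) + \delta \ge E_{t,x}\!\left[\int_t^{\tau_{b^\delta}} \left(\tfrac12|b^\delta|^2 + V\right) ds + U(\tau_{b^\delta}, Z_{\tau_{b^\delta}})\right] \ge G(t,x),
$$
and letting $\delta \to 0$ concludes.

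The main obstacle is not the strong Markov step itself but two technical points hidden behind it: first, the measurable selection of the $\eta$-optimal continuation strategy depending on the random data $(\tau_b, Z_{\tau_b})$, handled via continuity of $U$ (Proposition 3.1) plus a Borel selection theorem as in Krylov \cite{krylov2008controlled}; second, justifying that the $\delta$-optimal stopping time in the second half can be taken to dominate $\tau_{b^\delta}$, which relies on the strict $\epsilon$-gap $S - U > \epsilon$ inside the continuation region. The uniform integrability property \eqref{uniformly integrability} established earlier is what legitimates the limits $\eta, \delta \to 0$ and the required interchanges of limit and expectation.
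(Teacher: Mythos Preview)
The paper does not supply its own proof of this proposition: immediately before stating it the authors write that ``for further details about Bellman's principle for these control problems, we can refer to Krylov \cite{krylov2008controlled} (see also Pham \cite{pham1998optimal})'', and then move on to use it. So there is nothing to compare your argument against except the standard proofs in those references, and your two-inequality scheme (pasting plus measurable selection for $U\le G$; splitting a near-optimal strategy at $\tau_{b^\delta}$ for $U\ge G$) is exactly the route taken there.

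One imprecision worth flagging in the second half: you write that ``for $\delta$ sufficiently small one may arrange $\tau^\delta \ge \tau_{b^\delta}$ (replacing $\tau^\delta$ by $\tau^\delta \vee \tau_{b^\delta}$ if necessary)''. Simply replacing $\tau^\delta$ by $\tau^\delta\vee\tau_{b^\delta}$ is not harmless, since the running cost $\int_t^{\cdot}(\tfrac12|b^\delta|^2+V)\,ds$ may grow. The argument that actually works is the one you allude to via the $\epsilon$-gap: on the event $\{\tau^\delta<\tau_{b^\delta}\}\subset\{\tau^\delta<\tau_{t,x,b^\delta,\epsilon}\}$ one has $S(Z_{\tau^\delta})\ge U(\tau^\delta,Z_{\tau^\delta})+\epsilon$, and by pasting an $\eta$-optimal continuation after $\tau^\delta$ on that event one builds a strictly better strategy unless $P(\tau^\delta<\tau_{b^\delta})\le \delta/(\epsilon-\eta)$. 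This forces the contribution of that event to vanish as $\delta\to0$ (here the uniform integrability \eqref{uniformly integrability} is used), and on its complement your splitting argument goes through verbatim. With that adjustment your sketch is the standard one and is correct.
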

It is now possible to state the existence  of solution for the adjoint boundary problems defined above.
\begin{prop}\label{existence of viscosity solution}
Consider the forward and backward stochastic optimal control problems defined respectively by \eqref{forward J}-\eqref{forward process}-\eqref{optimal U} and \eqref{backward J}-\eqref{backward process}-\eqref{optimal U*} and Assumption \ref{regularity of V and S}. Then $U$ and $U^*$ are, respectively, viscosity solutions to equations \eqref{HJB v} and \eqref{HJB v*}. Additionally, the following conditions are satisfied
$$
U\left(\frac{T}{2},x\right)=S(x)\quad\text{and}\quad U^*\left(-\frac{T}{2},x\right)=S^*(x), \forall x\in\mathbb{R}^n.
$$ 
\end{prop}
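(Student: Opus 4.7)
The plan is to treat $U$ and $U^*$ in parallel, since the backward optimal control argument mirrors the forward one up to a time reversal and the corresponding sign change in front of $\partial_t$; I carry out the forward case and indicate the adaptation for $U^*$ at the end. The boundary condition $U(T/2,x)=S(x)$ is immediate: at $t=T/2$ the only admissible element of ${\cal T}_{T/2}$ is $\tau\equiv T/2$, so the cost reduces to $S(x)$ regardless of $b$; symmetrically $U^*(-T/2,x)=S^*(x)$. Continuity of $U$ (Proposition 3.1) makes $U$ locally bounded, so Definition \ref{Definition Viscosity Solution} applies.

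For the \emph{subsolution} property, let $\psi\in C^2$ be such that $U-\psi$ attains a local maximum of value $0$ at $(t,x)\in[-T/2,T/2)\times\mathbb{R}^n$. Since $\tau=t\in{\cal T}_t$ is admissible in \eqref{optimal U}, $U(t,x)\leq S(x)$, so only the Hamiltonian inequality needs work. Fix an arbitrary constant $b\in M$ and apply the Bellman principle (Proposition 3.3) with $\tau_h=(t+h)\wedge\tau_{t,x,b,\epsilon}$; since $U$ is the infimum,
\begin{equation*}
\psi(t,x)=U(t,x)\leq E_{t,x}\!\left[\int_t^{\tau_h}\!\!\left(\tfrac12|b|^2+V(Z_s)\right)ds+U(\tau_h,Z_{\tau_h})\right].
\end{equation*}
Using $U\leq\psi$ near $(t,x)$ to replace $U(\tau_h,Z_{\tau_h})$ by $\psi(\tau_h,Z_{\tau_h})$ (with remainder controlled by \eqref{uniformly integrability}), applying It\^o's formula to $\psi(s,Z_s)$ along \eqref{forward process}, taking expectation to kill the martingale, dividing by $h$ and letting $h\downarrow 0$ yields
\begin{equation*}
0\leq\partial_t\psi(t,x)+b\cdot\nabla\psi(t,x)+\tfrac{\hbar}{2}\Delta\psi(t,x)+\tfrac12|b|^2+V(x).
\end{equation*}
The inner minimum over $b$ is attained at $b=-\nabla\psi(t,x)$ and equals $-\tfrac12|\nabla\psi(t,x)|^2$, delivering exactly $-\partial_t\psi+\tfrac12|\nabla\psi|^2-\tfrac\hbar2\Delta\psi-V(x)\leq 0$.

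For the \emph{supersolution} property, let $U-\psi$ attain a local minimum of value $0$ at $(t,x)$. If $U(t,x)=S(x)$, the maximum in \eqref{HJB v} is non-negative trivially; so assume $U(t,x)<S(x)$. Continuity of $U$ and $S$ produces $\delta>0$ and a neighbourhood on which $U<S-\delta$, so for all sufficiently small $h>0$ the stopping time $\tau_{t,x,b,\delta}$ exceeds $t+h$ regardless of $b\in{\cal U}$, and the Bellman principle applies with the deterministic $\tau_h\equiv t+h$. For each $\eta>0$ pick an $\eta$-optimal control $b^\eta$, so
\begin{equation*}
\psi(t,x)+\eta h\geq E_{t,x}\!\left[\int_t^{t+h}\!\!\left(\tfrac12|b^\eta|^2+V(Z_s)\right)ds+U(t+h,Z_{t+h})\right].
\end{equation*}
Bounding $U(t+h,Z_{t+h})$ from below by $\psi(t+h,Z_{t+h})$, expanding via It\^o, using the pointwise estimate $b\cdot\nabla\psi+\tfrac12|b|^2\geq -\tfrac12|\nabla\psi|^2$ valid for every $b$, dividing by $h$, and sending first $h\downarrow 0$ and then $\eta\downarrow 0$ produces $-\partial_t\psi+\tfrac12|\nabla\psi|^2-\tfrac\hbar2\Delta\psi-V(x)\geq 0$.

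The backward assertion for $U^*$ follows by the same scheme: It\^o's formula is replaced by its backward counterpart driven by $d_*W^*$ (generator ${\cal L}^*$), the sign of $\partial_t\psi^*$ is reversed, and the roles of $({\cal P}_t,{\cal T}_t)$ are exchanged with $({\cal F}_t,{\cal T}^*_t)$; the resulting inequalities coincide with \eqref{HJB v*}. The technically delicate point I expect is the supersolution step: it requires (a) a measurable $\eta$-optimal control $b^\eta$ compatible with both the Bellman principle and the It\^o expansion, and (b) the justification that the error from replacing $U$ by $\psi$ at the (possibly random) endpoint vanishes as $h\downarrow 0$. Both points rest on continuity of $U$, the Lipschitz bounds of Assumption \ref{regularity of V and S}, and the uniform integrability \eqref{uniformly integrability}, and for these measure-theoretic details I would invoke the Bellman-type machinery of Krylov \cite{krylov2008controlled} and Pham \cite{pham1998optimal} already cited above.
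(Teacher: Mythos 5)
Your proposal is essentially the paper's own argument: boundary conditions by trivial admissibility of the endpoint stopping time, a case split into stopping region and continuation region, dynamic programming plus It\^o/Dynkin expansion, division by the time increment, passage to the limit, and completion of the square in the control. The paper carries out the backward case and invokes symmetry, while you carry out the forward one; that is merely a presentational choice. Your use of $\eta$-optimal controls in the supersolution step is an explicit way of commuting the infimum with the limiting procedure, whereas the paper manipulates the infimum directly inside the expectation; these are equivalent.

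There is, however, one technical flaw to flag. In the subsolution step you take $\tau_h=(t+h)\wedge\tau_{t,x,b,\epsilon}$. At a point $(t,x)$ of the stopping region $\cal S$ one has $U(t,x)=S(x)$, hence $\tau_{t,x,b,\epsilon}=t$ for every $\epsilon>0$ and therefore $\tau_h\equiv t$: the inequality you obtain reduces to $\psi(t,x)\leq\psi(t,x)$, and dividing by $h$ and letting $h\downarrow 0$ produces no information about the Hamiltonian. But Definition~\ref{Definition Viscosity Solution}(a) requires $\max\{{\cal H}^b[\psi],\,U-S\}\leq 0$ at \emph{every} local maximizer of $U-\psi$, including those lying in $\cal S$; the obstacle inequality $U\leq S$ alone does not force the max to be nonpositive. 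The fix is simple and is exactly what the paper does: use the one-sided Bellman inequality with the \emph{deterministic} intermediate time $t+h$ (run the constant control $b$ until $t+h$, behave optimally afterwards), namely
\begin{equation*}
U(t,x)\leq E_{t,x}\!\left[\int_t^{t+h}\left(\tfrac12|b|^2+V(Z_s)\right)ds+U(t+h,Z_{t+h})\right],
\end{equation*}
which holds unconditionally and requires no reference to $\tau_{t,x,b,\epsilon}$. With this replacement your division-by-$h$ argument is valid at all test points and the proof is complete.
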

\begin{proof}The proof of the result for the forward and backward cases is similar. Therefore, we will focus our attention on the backward case which is less common.

To prove that $U^*$ is a viscosity solution to the HJB equation \eqref{HJB v*} and satisfies the boundary condition $U^*(-T/2,x)=S^*(x)$, we will split the proof in three steps.
\vskip 3mm
\textbf{(i) Supersolution property:}

\vskip 2mm

 Let $(t,x)\in[-T/2,T/2)\times\mathbb{R}$ and $\psi\in C^2([-T/2,T/2)\times\mathbb{R})$ be such that $(x,t)$ is a local minimizer of $U^*-\psi$ and $U^*(t,x)=\psi(t,x)$. We start by noticing that for every $(t,x)\in{\cal S}$, the forward stopping region of Section 3, we have $U^*(t,x)=S^*(x)$. Fix $(t,x)\in{\cal C}$, the forward continuation region,  and let $\theta^*_{b^*}\in (\tau^*_{t,x, b^*,\epsilon},t)$ be such that $Z_{s}$ starts at $x$ and stays in a neighborhood $N(x)$ for $\theta^*_{b^*}\leq s\leq t$. Therefore, from the dynamical programming principle, we have
\begin{align}\nonumber
U^*(t,x)&= {\inf_{b^*\in {\cal U^*}}E_{t,x}\Bigg[\int_{\theta^*_{b^*}}^t\frac{1}{2}\vert b^*(s,Z_s)\vert^2+V(Z_s)ds+U^*(\theta_u,Z_{\theta_u})\Bigg]}\\
&\geq \inf_{b^*\in {\cal U}^*} E_{t,x}\Bigg[\int_{u}^t\frac{1}{2}\vert  b^*(s,Z_s)\vert^2+V(Z_s)ds+\psi^*(u,Z_{u})\Bigg].\label{Eq SupP 1}
\end{align}
 Applying  Dynkin's formula in $E_{t,x}\Bigg[\psi(\theta^*_{b^*},Z_{\theta^*_{b^*}})\Bigg]$, we get 
 \begin{align}
 U^*(t,x)-E_{t,x}\Bigg[\psi(\theta^*_{b^*},Z_{\theta^*_{b^*}})\Bigg]=E_{t,x}\Bigg[\int_{\theta^*_{b^*}}^{t}\frac{\partial \psi}{\partial t}(s,Z_s)+ b^*(s,Z_s)\cdot\nabla \psi-\frac{\hbar}{2}\Delta \psi(s,Z_s)ds\Bigg].\label{Eq SupP 2}
 \end{align}
  Consequently, combining \eqref{Eq SupP 1} and \eqref{Eq SupP 2}, it follows that
 \begin{align*}
 0&\leq {\inf_{b^*\in {\cal U^*}}}E_{t,x}\Bigg[\int_{\theta_u}^t\frac{\partial \psi}{\partial t}(s,Z_s)+ b^*(s,Z_s)\cdot\nabla \psi-\frac{\hbar}{2}\Delta \psi(s,Z_s)-\frac{1}{2}\vert b^*(s,Z_s)\vert^2-V(Z_s)ds\Bigg].
% &=\frac{1}{u}E_{t,x}\Bigg[\int_{\theta_u}^t\frac{\partial v^*}{\partial t}(s,Z_s)+\frac{1}{2}\vert\nabla v^*(s,Z_s)\vert^2-\frac{\hbar}{2}\Delta v^*(s,Z_s)-V(x)ds\Bigg].
 \end{align*}
 By letting  $\theta^*_{b^*}\nearrow t$ and dividing by $E[\theta^*_{b^*}]$, we get
  \begin{align*}
 0&\leq {\inf_{b^*\in {\cal U^*}}}\left\{\frac{\partial \psi}{\partial t}(t,x)+ b^*(t,x)\cdot\nabla \psi-\frac{\hbar}{2}\Delta \psi(t,x)-\frac{1}{2}\vert b^*(t,x)\vert^2-V(x) \right\}
 \end{align*}
 and, consequently,
 \begin{equation*}
  \frac{\partial \psi^*}{\partial t}+\frac{1}{2}\vert\nabla \psi^*\vert^2-\frac{\hbar}{2}\Delta \psi^*-V(x)\geq 0\quad\text{for all } (t,x)\in(-T/2,T/2]\times\mathbb{R}^n.
\end{equation*}
\vskip 3mm

\textbf{(ii) Subsolution property:}

\vskip 2mm
 Let $(t,x)\in[-T/2,T/2)\times\mathbb{R}$ and $\psi\in C^2([-T/2,T/2)\times\mathbb{R})$ be such that $(x,t)$ is a local maximizer of $U^*-\psi$ and $U^*(x,t)=\psi(x,t)$. From the dynamical programming principle, we have that, for any $s\leq t$
 \begin{align}\nonumber
 U^*(t,x)&\leq E_{t,x}\Bigg[\int_{s}^t\frac{1}{2}\vert  b^*(u,Z_u )\vert^2+V(Z_u )du+U^*(s,Z_{s})\Bigg]\\
 &\leq E_{t,x}\Bigg[\int_{s}^t\frac{1}{2}\vert  b^*(u,Z_u )\vert^2+V(Z_u )du+\psi(s,Z_{s})\Bigg].\label{Eq SubP 1}
 \end{align}
 Consequently, combining \eqref{Eq SupP 2} and \eqref{Eq SubP 1} and using a similar argument to the one used in the proof of the supersolution property, it follows that
  \begin{align*}
0&\geq \frac{1}{s}E_{t,x}\Bigg[\int_{s}^t\frac{\partial \psi}{\partial t}(u,Z_u )+ b^*(u,Z_u )\cdot\nabla \psi-\frac{\hbar}{2}\Delta \psi(u,Z_u )-\frac{1}{2}\vert b^*(u,Z_u )\vert^2-V(Z_u )du\Bigg].
 \end{align*}
 Therefore, letting $s\nearrow t$ and using the dominated convergence theorem, we get that
 \begin{equation}
 0\geq 	\frac{\partial \psi}{\partial t}(t,x)+ b^*(t,x)\cdot\nabla \psi-\frac{\hbar}{2}\Delta \psi(t,x)-\frac{1}{2}\vert b^*(t,x)\vert^2-V(x).
 \end{equation}
 Since $b^*$ is an arbitrary control, we have the required result:
 \begin{equation}\label{Eq SubP 3}
 0\geq  \frac{\partial \psi^*}{\partial t}+\frac{1}{2}\vert\nabla \psi^*\vert^2-\frac{\hbar}{2}\Delta \psi^*-V(x).
 \end{equation}
 Combining \eqref{Eq SubP 3} with the fact that $U^*(t,x)\leq J^*_{t,x}(Z;t,b)=S^*(x)$, this naturally implies that
$$ \max\left\{\frac{\partial \psi^*}{\partial t}+\frac{1}{2}\vert\nabla \psi^*\vert^2-\frac{\hbar}{2}\Delta \psi^*-V(x),\psi^*-S^*(x)\right\}\leq 0.$$
\vskip 3mm
 \textbf{(iii) Boundary condition:} 
 
 \vskip 2mm
 
 By construction, $J^*_{-T/2,x}(Z,\tau^*,b)=S^*(x)$ for all $\tau^* \in {\cal T}_{-T/2}$ (a ${\cal P}_{-\frac{T}{2}}$ - stopping time). Therefore, one can trivially conclude that $U^*(-T/2,x)=S^*(x)$.
\end{proof}
\vskip 3mm

 Until the next section we will state some auxiliary results that will be useful to prove an uniqueness result. To present these results we let $\tilde S:[-T/2,T/2]\times\mathbb{R}^n\to\mathbb{R}$ and $\tilde S^*:[-T/2,T/2]\times\mathbb{R}^n\to\mathbb{R}$ be two continuous functions, $A\subset [-T/2,T/2]\times\mathbb{R}^n$ be an open bounded set, $\tau_A=\inf\{u>t:\,(u,Z_u )\notin A\}$ and $\tau^*_A=\sup\{s<t:\,(s,Z_s)\notin A\}$.
\begin{lemma}\label{remark}
	Consider the two modified optimal stopping problems 
	\begin{align*}
	\tilde U(x,t)&=\inf_{(b,\tau)\in{\cal U}\times{\cal T}_{t}}E_{t,x}\left[\int_t^{\tau\wedge \tau_A}\left( \frac{1}{2}\vert b(s,Z_s)\vert^2+V(Z_s) \right)ds+\tilde{S}(\tau\wedge \tau_A,Z_{\tau\wedge \tau_A})\right],\\
	\tilde U^*(x,t)&=\inf_{(b^*,\tau^*)\in{\cal U}^*\times{\cal T}^*_{t}}E_{t,x}\left[\int_{\tau_A^*\vee\tau^*}^{t}\left( \frac{1}{2}\vert b^*(s,Z_s)\vert^2 + V(Z_s) \right) ds+\tilde S^*(\tau_A^*\vee\delta,Z_{\tau_A^*\vee\delta})\right].
	\end{align*}
 Then, the value functions $\tilde U:A\to\mathbb{R}$ and $\tilde U^*:A\to\mathbb{R}$  are respectively viscosity solutions of the adjoint boundary problems
	\begin{align}\label{Auxiliary boundary problem forward}
	\begin{cases}
	&\max\left\{-\frac{\partial v}{\partial t}+\frac{1}{2}\vert\nabla v\vert^2-\frac{\hbar}{2}\Delta v-V(x),v-\tilde S(t,x)\right\}=0,\quad (t,x)\in  A\\
	&v(t,x)= \tilde S(t,x),\quad (t,x)\in \partial A
	\end{cases},
	\end{align}
	and
	\begin{align}\label{Auxiliary boundary problem backward}
	\begin{cases}
	&\max\left\{\frac{\partial v^*}{\partial t}+\frac{1}{2}\vert\nabla v^*\vert^2-\frac{\hbar}{2}\Delta v^*-V(x),v^*-\tilde S^*(t,x)\right\}=0,\quad (t,x)\in  A\\
	&v^*(t,x)= \tilde S^*(t,x), \quad (t,x)\in \partial A
	\end{cases}.
	\end{align}
\end{lemma}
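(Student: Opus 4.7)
The plan is to follow the argument of Proposition \ref{existence of viscosity solution} essentially line by line, replacing the deterministic terminal time $T/2$ (respectively $-T/2$) by the exit time $\tau_A$ (respectively $\tau_A^*$). Inside $A$ the function $\tilde S$ plays the role of the obstacle bounding the value function from above, while on $\partial A$ it plays the role of Dirichlet boundary datum; both effects are encoded in the single terminal payoff $\tilde S(\tau\wedge\tau_A,Z_{\tau\wedge\tau_A})$. I will write out the forward case in detail; the backward statement is obtained by the same scheme after swapping $(\tau_A,{\cal T}_t,{\cal P}_t)$ for $(\tau_A^*,{\cal T}_t^*,{\cal F}_t)$ and reversing the time direction.

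The first step is to record the truncated dynamic programming principle: for $(t,x)\in A$, $\epsilon>0$, and any ${\cal P}_t$-stopping time $\theta_b\leq\tau_{t,x,b,\epsilon}\wedge\tau_A$,
\begin{equation*}
\tilde U(t,x)=\inf_{b\in{\cal U}}E_{t,x}\left[\int_t^{\theta_b}\left(\frac{1}{2}|b(s,Z_s)|^2+V(Z_s)\right)ds+\tilde U(\theta_b,Z_{\theta_b})\right].
\end{equation*}
This follows by exactly the same argument used to establish the Bellman principle in the previous proposition, because from the DPP viewpoint stopping the dynamics at the exit time $\tau_A$ is completely analogous to stopping it at $T/2$. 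As an immediate by-product, choosing $\tau=t$ in the infimum defining $\tilde U$ yields the obstacle inequality $\tilde U(t,x)\leq\tilde S(t,x)$ on $A$.

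The PDE verification then proceeds exactly as in steps (i) and (ii) of the proof of Proposition \ref{existence of viscosity solution}. For the supersolution property, if $\psi\in C^2$ and $(t,x)\in A$ is a local minimiser of $\tilde U-\psi$ with $\tilde U(t,x)=\psi(t,x)$, I will pick $\theta_b\in(t,\tau_{t,x,b,\epsilon}\wedge\tau_A)$ small enough that $Z$ remains in a neighbourhood of $x$ lying inside $A$; combining the DPP, the inequality $\tilde U\geq\psi$ on that neighbourhood and Dynkin's formula, then dividing by $E[\theta_b-t]$ and letting $\theta_b\downarrow t$ gives the supersolution inequality on the Hamiltonian. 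For the subsolution property at a local maximiser, I will use an arbitrary fixed control $b\in{\cal U}$ and a small time $s\downarrow t$ chosen so that $Z$ has not yet left $A$; Dynkin's formula together with the dominated convergence theorem produces the reverse PDE inequality, which combined with $\tilde U\leq\tilde S$ delivers the full max-inequality required by Definition \ref{Definition Viscosity Solution}. The boundary condition is immediate: on $\partial A$ one has $\tau_A=t$ almost surely, hence $\tau\wedge\tau_A=t$ for every $\tau\in{\cal T}_t$, the running cost vanishes and $\tilde U(t,x)=\tilde S(t,x)$.

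The main technical obstacle is not the PDE step, which is structurally identical to Proposition \ref{existence of viscosity solution}, but rather justifying the truncated DPP and the continuity of $\tilde U$ up to $\partial A$. The former is routine once one observes that $\tau_A$ is a genuine ${\cal P}_t$-stopping time, which holds because $A$ is open and $Z$ has continuous paths. The latter, needed so that the boundary data $\tilde S$ is attained in the pointwise sense required by viscosity theory, follows from the Lipschitz continuity of $V$ and $\tilde S$ together with the boundedness of $A$ and the uniform integrability property \eqref{uniformly integrability}, by the same strong-Markov / Jensen argument used in the proof of the earlier uniform-integrability proposition.
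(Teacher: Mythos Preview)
Your proposal is correct and matches the paper's own treatment: the paper does not give a separate proof of this lemma but simply states that it ``can be proven by similar arguments to the ones of Proposition \ref{existence of viscosity solution}''. Your write-up is in fact more detailed than what the paper provides, spelling out precisely which substitutions ($T/2\rightsquigarrow\tau_A$, $S\rightsquigarrow\tilde S$, etc.) and which technical checks (truncated DPP, boundary condition via $\tau_A=t$ on $\partial A$) are needed.
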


This result can be proven by similar arguments to the ones  of Proposition \ref{existence of viscosity solution}. In the next lemma, we prove that solutions $v$ and $v^*$ of   \eqref{Auxiliary boundary problem forward} and \eqref{Auxiliary boundary problem backward} are also solutions to the boundary problems
	\begin{align}\label{Auxiliary boundary problem forward 2}
\begin{cases}
&-\frac{\partial v}{\partial t}+\frac{1}{2}\vert\nabla v\vert^2-\frac{\hbar}{2}\Delta v-V(x)=0,\quad (t,x)\in  A_v\\
&v(t,x)= \tilde S(t,x),\quad (t,x)\in \partial A_v
\end{cases},
\end{align}
and
\begin{align}\label{Auxiliary boundary problem backward 2}
\begin{cases}
&\frac{\partial v^*}{\partial t}+\frac{1}{2}\vert\nabla v^*\vert^2-\frac{\hbar}{2}\Delta v^*-V(x)=0,\quad (t,x)\in  A^*_{v^*}\\
&v^*(t,x)= \tilde S^*(t,x),\quad (t,x)\in \partial A^*_{v^*}
\end{cases},
\end{align}
where 
$$
A_v=\left\{(t,x):\,v(t,x)< \tilde{S}(t,x)\right\}\quad\text{and}\quad A^*_{v^*}=\left\{(t,x):\,v^*(t,x)< \tilde{S}^*(t,x)\right\}.
$$
\begin{lemma}\label{v > h}
	Let $v:A\to\mathbb{R}$ and $ v^*:A\to\mathbb{R}$  be viscosity solutions to \eqref{Auxiliary boundary problem forward} and \eqref{Auxiliary boundary problem backward}. Then, $v$ and $v^*$ are, respectively, viscosity solutions of the boundary problems \eqref{Auxiliary boundary problem forward 2} and \eqref{Auxiliary boundary problem backward 2}. Additionally, $v(t,x)\leq \tilde S(t,x)$ and $v^*(t,x)\leq \tilde S^*(t,x)$. 
\end{lemma}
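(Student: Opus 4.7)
The plan is to establish both assertions for $v$ in detail, since the argument for $v^*$ is identical up to time-reversal of the adjoint HJB operator.

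\textbf{Step 1: Pointwise bound $v\le\tilde S$ on $A$.} I argue by contradiction. Set $w(t,x):=v(t,x)-\tilde S(t,x)$; by hypothesis $w$ is continuous on $\overline A$ and vanishes on $\partial A$. If $w(t_0,x_0)>0$ at some $(t_0,x_0)\in A$, then $w$ attains a strictly positive maximum at an interior point $(t_1,x_1)\in A$. The goal is to produce a $C^2$ test function touching $v$ from above at $(t_1,x_1)$ in order to exploit the subsolution property of \eqref{Auxiliary boundary problem forward}. If $\tilde S$ were $C^2$, one could take directly $\psi(t,x):=\tilde S(t,x)+w(t_1,x_1)$; under mere continuity of $\tilde S$, one regularizes by sup-convolution $\tilde S_\varepsilon\downarrow\tilde S$, uses Jensen's lemma to obtain a $C^2$ test function at a slightly perturbed maximum, and then lets $\varepsilon\downarrow 0$. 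The subsolution inequality at the (perturbed) contact point forces $v-\tilde S\le 0$ there, contradicting $w(t_1,x_1)>0$ in the limit. The analogous argument with obstacle $\tilde S^*$ and the backward HJB operator yields $v^*\le\tilde S^*$.

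\textbf{Step 2: Reduction to the bare HJB equation on $A_v$.} Fix $(t_0,x_0)\in A_v$; by continuity of $v-\tilde S$ and the definition of $A_v$, the inequality $v<\tilde S$ persists in a neighborhood of $(t_0,x_0)$. For any $\psi\in C^2$ such that $v-\psi$ attains a local maximum at $(t_0,x_0)$ with $v(t_0,x_0)=\psi(t_0,x_0)$, the subsolution property of \eqref{Auxiliary boundary problem forward} gives, at $(t_0,x_0)$,
\begin{align*}
\max\left\{-\partial_t\psi+\tfrac12|\nabla\psi|^2-\tfrac{\hbar}{2}\Delta\psi-V(x),\; v(t_0,x_0)-\tilde S(t_0,x_0)\right\}\le 0.
\end{align*}
Since the second entry is strictly negative, the first entry must also be $\le 0$, which is exactly the subsolution inequality for \eqref{Auxiliary boundary problem forward 2}. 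The supersolution inequality follows by the same reasoning with test functions touching $v$ from below: the supersolution condition for the obstacle problem yields $\max\{\cdot,\cdot\}\ge 0$, and the obstacle entry is again strictly negative, forcing the PDE entry to be $\ge 0$. On $\partial A_v$, either $(t,x)\in\partial A$ with $v(t,x)=\tilde S(t,x)$ by the boundary data of \eqref{Auxiliary boundary problem forward}, or $(t,x)\in A\cap\overline{A_v}\setminus A_v$ with $v(t,x)=\tilde S(t,x)$ by continuity and the definition of $A_v$; either way the Dirichlet data of \eqref{Auxiliary boundary problem forward 2} hold. The same analysis, with $\partial_t$ replaced by $-\partial_t$, handles $v^*$ on $A^*_{v^*}$.

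\textbf{Main obstacle.} The only genuinely non-routine ingredient is the viscosity argument in Step 1 that establishes the global obstacle bound $v\le\tilde S$: because $\tilde S$ in \eqref{Auxiliary boundary problem forward} is only assumed continuous, producing an admissible $C^2$ test function at the interior maximum of $v-\tilde S$ requires a regularization together with a perturbation technique (sup-convolution of $\tilde S$ combined with Jensen's lemma, or a standard doubling-of-variables scheme). Once that pointwise comparison is in hand, Step 2 is a direct unpacking of the viscosity definition on the region where the obstacle is inactive, and the boundary condition transfers automatically.
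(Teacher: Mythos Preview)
Your proof is correct, and Step 2 matches the paper's argument essentially verbatim. The genuine difference is in Step 1. You obtain $v\le\tilde S$ by contradiction, looking at an interior maximum of $v-\tilde S$ and invoking sup-convolution of $\tilde S$ together with Jensen's lemma to manufacture an admissible $C^2$ test function. The paper bypasses all of this with a one-line trick: at any point where $v$ itself attains a local maximum (obtained as the maximum of $v$ over a small closed ball around an arbitrary $(t,x)\in A$), the \emph{constant} function $\psi\equiv v(t_0,x_0)$ is a $C^2$ test function touching $v$ from above, and the subsolution inequality for the obstacle problem immediately yields $v(t_0,x_0)-\tilde S(t_0,x_0)\le 0$ from the obstacle branch of the $\max$; shrinking the ball gives the bound everywhere. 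This avoids any regularity requirement on $\tilde S$ and any regularization machinery. Your route is more robust in that it would survive if the obstacle term entered the variational inequality in a more complicated way, but here the paper's constant-test-function argument is strictly simpler and worth knowing.
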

\begin{proof}
	Since the function $v$ is continuous, then for any ball ${\cal B}_{\epsilon}(t,x)$ with radius $\epsilon>0$ and center $(x,t)$, there exists $(t_0,x_0)$ such that $$v(t_0,x_0)=\max_{(t',x')\in\overline{\cal B}_{\epsilon}(t,x)} v(t',x').$$ 
	By choosing $\psi(t,x)=v(t_0,x_0)$, we have that $\psi(t_0,x_0)=v(t_0,x_0)$ and $(t_0,x_0)$ is a local minimum for the function $v-\psi$. Since the function $v$ is a subsolution to \eqref{Auxiliary boundary problem forward}, we have  $v(t_0,x_0)=\psi(t_0,x_0)\leq \tilde{S}(t_0,x_0)$. Letting $\epsilon$ go to $0$, we obtain $v(t,x)\leq \tilde{S}(t,x)$ for all $(t,x)\in A$. In particular $v(t,x)<\tilde{S}(t,x)$ when $(t,x)\in A_v$ and $v(t,x)=\tilde{S}(t,x)$ when $(t,x)\notin A_v$.
	
	Since $v(t,x)<\tilde{S}(t,x)$ when $(t,x)\in A_v$ and $v$ is a viscosity solution to \eqref{Auxiliary boundary problem forward}, we deduce that: (i) if $\psi\in C^2([-T/2,T/2)\times\mathbb{R}^n)$ and $(t,x)\in [-T/2,T/2)\times\mathbb{R}^n$ are such that ${v}-\psi$ has a local maximum at $(t,x)$ and ${v}(t,x)=\psi(t,x)$, then
	\begin{align*}
	&-\frac{\partial \psi}{\partial t}+\frac{1}{2}\vert\nabla \psi\vert^2-\frac{\hbar}{2}\Delta \psi-V(x)\leq 0;
	\end{align*} 
(ii) if $\psi\in C^2([-T/2,T/2)\times\mathbb{R}^n)$ and $(t,x)\in [-T/2,T/2)\times\mathbb{R}^n$ are such that ${v}-\psi$ has a local minimum at $(t,x)$ and ${v}(t,x)=\psi(t,x)$, then
\begin{align*}
&-\frac{\partial \psi}{\partial t}+\frac{1}{2}\vert\nabla \psi\vert^2-\frac{\hbar}{2}\Delta \psi-V(x)\geq 0;
\end{align*}
	which means that $v$ is a viscosity solution \eqref{Auxiliary boundary problem forward 2}. A similar argument can be used to prove the statements for $v^*$. 
\end{proof}
\vskip 3mm
 {From this result, it is clear that $U$ (resp., $U^*$) is a viscosity solution to the boundary problem \eqref{Auxiliary boundary problem forward 2} (resp., \eqref{Auxiliary boundary problem backward 2}), if one replaces $\tilde S$ by $S$ and $A_{v}$ by ${\cal C}$ (resp., $\tilde{S}^*$ by $S^*$ and $A_{v}^*$ by ${\cal C}^*$). Since the continuation regions $\cal C$ and $\cal C^*$ are unknown, initially, the boundary problems described above are known as free-boundary problems} (see for instance Caffarelli and Salsa \cite{caffarelli2005geometric}).

To finalize this section, we notice that, in light of the stochastic optimal control theory, (see, for instance Fleming and Soner \cite{fleming2006controlled}), we observe that the value functions  {$S_{\tau_A}$} and  {$S^*_{\tau^*_{A^*}}$}, defined by
\begin{align}\label{value function H}
& {S_{\tau_A}}(t,x)=\inf_{b\in {\cal U}}E_{t,x}\left[\int_{t}^{\tau_A}\{ \frac{1}{2}\vert b(u,Z_u )\vert^2 + V(Z_u ) \} du+S(Z_{\tau_A})\right]\\\label{value function H*}
& {S^*_{\tau^*_{A^*}}}(t,x)=\inf_{b^*\in {\cal U}^*}E_{t,x}\left[\int_{\tau^*_{A^*}}^{t}\{ \frac{1}{2}\vert b^*(s,Z_s)\vert^2 + V(Z_s)\} ds+S^*(Z_{\tau^*_{A^*}})\right]
\end{align}
satisfy, respectively, the boundary problems \eqref{Auxiliary boundary problem forward 2} and \eqref{Auxiliary boundary problem backward 2}, but now assuming that $A_v=A$ and $A_v=A^*$. In addition the optimal strategy is given by  {$\tilde b=-\nabla S_{\tau_A}$ and $\tilde b^*=\nabla S^*_{\tau^*_{A^*}}$}.   

\section{A uniqueness result}\label{A uniqueness result}

We now present a uniqueness result. We prove that our value functions are, indeed, the unique solutions for the boundary problems presented in \eqref{HJB v}-\eqref{HJB v*}-\eqref{boundary condition}. Additionally, in light of the Lemma \ref{v > h}, we deduce that $U$ and $U^*$ are solutions to  the boundary problems \eqref{Auxiliary boundary problem forward 2} and \eqref{Auxiliary boundary problem backward 2}, when we  replace, respectively, $A_v$ and $A_{v^*}^*$ by $\cal C$ and ${\cal C}^*$. Similar results in the field of optimal stopping can be found in {\O}ksendal and Reikvam \cite{reikvam1998viscosity}.

\begin{teo}\label{uniqueness result}
Consider the forward and backward stochastic optimal control problems defined respectively by \eqref{forward J}-\eqref{forward process}-\eqref{optimal U} and \eqref{backward J}-\eqref{backward process}-\eqref{optimal U*}. Then:
\begin{itemize}
\item[1)] the value function $U$ is the unique viscosity solution to the HJB equation \eqref{HJB v} that satisfies both the {left  boundary condition} in \eqref{boundary condition} and the condition
\begin{align}\label{uniformly integrability U}
&\{U({\tau ,}X_\tau)\}_{\tau\in {\cal P}_t} \text{ is a uniformly integrable} \text{ family of random variables.}
\end{align}
{Additionally the optimal strategy is given by $$\hat{\tau}=\inf\{ -T/2\leq t\leq u\leq T/2\,:U(u,Z_u )\geq S(Z_u )\}\text{ and }\hat b(t,x)=-\nabla U(t,x).$$}
\item[2)] the value function $U^*$ is the unique viscosity solution to the HJB equation  \eqref{HJB v*} that satisfies both the right-hand side {right boundary condition} in \eqref{boundary condition} and the condition
\begin{align}\label{uniformly integrability U*}
&\{U^*({\tau ,}X_\tau)\}_{\tau\in {\cal F}_t} \text{ is a uniformly integrable} \text{ family of random variables.}
\end{align}
{Additionally the optimal strategy is given by $$\hat{\tau}^*=\sup\{-T/2\leq s\leq t\leq T/2\,:U^*(s,Z_s)\geq S^*(Z_s)\}\text{ and }\hat b^*(t,x)=\nabla U^*(t,x).$$}
\end{itemize}
\end{teo}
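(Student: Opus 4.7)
I would treat the forward part (1) in detail; the backward part (2) follows by a symmetric argument using the backward Dynkin formula and the decreasing filtration $\{{\cal F}_t\}$ in place of $\{{\cal P}_t\}$.

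The heart of the uniqueness claim is a comparison principle for the obstacle HJB equation \eqref{HJB v}: if $\underline v$ is a viscosity subsolution and $\overline v$ a viscosity supersolution on $[-T/2,T/2)\times\mathbb R^n$ with $\underline v(T/2,\cdot)\leq S\leq \overline v(T/2,\cdot)$, and both satisfy the uniform integrability property \eqref{uniformly integrability U}, then $\underline v\leq \overline v$. I would obtain this by the standard Crandall--Ishii--Lions doubling-of-variables technique, adapted to the variational-inequality form. One forms a penalized functional of shape $\underline v(t,x)-\overline v(s,y)-\varepsilon^{-1}(|x-y|^2+(t-s)^2)-\delta(|x|^2+|y|^2)$, localizes the maximum, and exploits the two regimes of the $\max\{\cdot,\cdot\}=0$ equation: either the obstacle inequality $\underline v-S\leq 0\leq\overline v-S$ is active at the maximizer and gives the conclusion directly, or the PDE inequalities at the doubled points, combined with the quadratic Hamiltonian and the Lipschitz-in-$x$ regularity from Proposition 3.2, produce a contradiction in the limit $\varepsilon,\delta\to 0$. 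Uniqueness in the class \eqref{uniformly integrability U} then follows by applying comparison in both directions between $U$ and any competing solution.

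For the optimality of $\hat\tau$ and $\hat b$, set $\hat\tau=\inf\{s\geq t:U(s,Z_s)\geq S(Z_s)\}$ and $\hat b=-\nabla U$. By Lemma \ref{v > h}, $U$ solves in the viscosity sense the linear PDE $-\partial_t U+\frac{1}{2}|\nabla U|^2-\frac{\hbar}{2}\Delta U-V=0$ on the continuation region $\cal C$, while $U=S$ on $\cal S$. For an arbitrary admissible $(b,\tau)$ and the controlled diffusion \eqref{forward process}, Dynkin's formula applied to $U(s,Z_s)$ between $t$ and $\tau\wedge T/2$, together with the subsolution property and the pointwise Young inequality $b\cdot\nabla U+\frac{1}{2}|b|^2\geq -\frac{1}{2}|\nabla U|^2$, yields $U(t,x)\leq J_{t,x}(Z;\tau,b)$; the uniform integrability \eqref{uniformly integrability U} is precisely what makes the Dynkin passage rigorous at the random time $\tau$. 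Running the diffusion with $\hat b$ and stopping at $\hat\tau$ turns both inequalities into equalities on $[t,\hat\tau)$, and $U(\hat\tau,Z_{\hat\tau})=S(Z_{\hat\tau})$ by continuity of $U$ and the definition of $\hat\tau$, giving $U(t,x)=J_{t,x}(Z;\hat\tau,\hat b)$.

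The main obstacle I expect is the comparison step on the unbounded domain. Because we only control $U$ through uniform integrability of $U(\tau,Z_\tau)$ along stopping times, rather than through a polynomial or exponential pointwise growth bound, the penalization $-\delta(|x|^2+|y|^2)$ does not automatically force the supremum to be attained; this is exactly the setting treated by {\O}ksendal--Reikvam \cite{reikvam1998viscosity}, whose localization-and-penalty scheme I would adapt. A secondary technical point is that Dynkin's formula in the verification step must be justified for a merely viscosity (not $C^2$) solution $U$; this is handled by mollifying $U$, using the Lipschitz-in-$x$ regularity uniformly in $t$, and passing to the limit as the mollifier parameter tends to zero.
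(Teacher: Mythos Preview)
Your route and the paper's are genuinely different. You aim at a \emph{global} PDE comparison on $[-T/2,T/2)\times\mathbb R^n$ via doubling of variables, followed by a separate verification argument. The paper instead never proves a global comparison principle: it takes any viscosity solution $v$ in the class \eqref{uniformly integrability U}, restricts it to an increasing sequence of bounded open sets $A_N\nearrow[-T/2,T/2]\times\mathbb R^n$, and observes that $v_N:=v|_{\overline A_N}$ solves the auxiliary obstacle problem \eqref{Auxiliary boundary problem backward} with its own trace as lateral datum. On each bounded $A_N$ the standard Crandall--Ishii--Lions comparison applies, which forces $v_N$ to coincide with the value function $\tilde U_N$ of the modified optimal stopping problem of Lemma \ref{remark}. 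This gives a \emph{stochastic representation} of $v_N$, and the passage $N\to\infty$ is then purely probabilistic: monotone convergence on the running cost and the uniform integrability hypothesis on the terminal term produce $v\le U$ (using $v_N\le S$ from Lemma \ref{v > h}) and, via the free-boundary representation \eqref{value function H}--\eqref{value function H*} on $A_{v}^N=\{v<S\}\cap A_N$, also $v\ge U$. The optimal pair $(\hat\tau,\hat b)$ falls out of this representation automatically, with no separate Dynkin/verification step.

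Your plan has two soft spots you already flag, but I would upgrade them to genuine gaps. First, the penalization $-\delta(|x|^2+|y|^2)$ requires a pointwise growth bound to localize the supremum; uniform integrability of $\{v(\tau,Z_\tau)\}_\tau$ does not provide one, and the reference you invoke (\O ksendal--Reikvam \cite{reikvam1998viscosity}) does not supply a global doubling argument under that hypothesis either --- their method is precisely the bounded-domain localization plus stochastic representation that the paper runs. So ``adapting'' it would in fact lead you to the paper's proof, not to a global comparison. Second, the verification step via Dynkin's formula for a merely viscosity solution is more delicate than mollification suggests: convolving a viscosity subsolution of a second-order equation does not in general yield a classical subsolution (the Laplacian does not commute with taking $\sup$/$\inf$ in the right direction), so the inequality $U(t,x)\le J_{t,x}(Z;\tau,b)$ cannot be obtained this way without additional convexity or semiconcavity information that is not available here. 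The paper sidesteps both issues by never leaving the bounded-domain world until it already has a stochastic formula in hand.
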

\begin{proof}
 Consider an open bounded set $A_N\subset [-T/2,T/2]\times\mathbb{R}^n$ such that $A_N\nearrow [-T/2,T/2]\times\mathbb{R}^n$, as $N\to\infty$, and the function $v^*_N$ that verifies $v^*_N(t,x)=v^*(t,x), \text{ for all  }(t,x)\in\overline{A}_N,$ where $v^*$ is a viscosity solution to \eqref{HJB v*} such that the right hand-side of \eqref{boundary condition} is satisfied and $\{v^*(X_\tau)\}_{\tau\in {\cal F}_t}$ is a uniformly integrable family of random variables. By construction, we know that $v^*_N$ is a viscosity solution of \eqref{Auxiliary boundary problem backward}, when one fixes $\tilde S^*=v^*_N$ that is, in fact, unique according to the comparison principle for bounded domains, presented by Crandall, Ishii and Lions \cite{crandall1992user}. Therefore, from Proposition \ref{existence of viscosity solution} and Lemma \ref{remark}, we have 
 	\begin{align*}
 v^*_N(t,x)&=
 \inf_{(b^*,\tau^*)\in{\cal U}^*\times{\cal T}^*_{t}}E_{t,x}\left[\int_{\tau^*_N\vee\tau^*}^{t}\frac{1}{2}\vert b^*(s,Z_s)\vert^2 + V(Z_s) ds+v^*_N\left(\tau\vee\tau_{N}^*,Z_{\tau\vee\tau_{N}^*}\right)\right],
 \end{align*}
 where, {$\tau^*_N=\sup\{-T/2\leq s\leq t\leq T/2:(s,Z_s)\notin A_N\}$}. By construction, $b^*$ is already chosen (as one can see in $\eqref{optimal b}$), i.e ~$b^*=\tilde b^*\equiv-\nabla v^*$. Therefore,
	\begin{align*}
	v^*_N(t,x)&=
	\inf_{\tau^*\in{\cal T}^*_{t}}E_{t,x}\left[\int_{\tau^*_N\vee\tau^*}^{t} \frac{1}{2}\vert \tilde b^*(s,Z_s)\vert^2 + V(Z_s) ds+v^*_N\left(\tau\vee\tau_{N}^*,Z_{\tau\vee\tau_{N}^*}\right)\right]\\
	&\leq \inf_{\tau^*\in{\cal T}^*_{t}}E_{t,x}\left[\int_{\tau^*_N\vee\tau^*}^{t} \frac{1}{2}\vert \tilde b^*(s,Z_s)\vert^2 + V(Z_s) ds+S^*\left(Z_{\tau\vee\tau_{N}^*}\right)\right]
	\end{align*}
	the last inequality being a consequence of Lemma \ref{v > h}.
%	where $\tau^*_{N}\equiv\sup\{s<t:(s,Z_s)\notin A_N\}$, $(\tilde\tau_N^*,\tilde b^*)$ is the optimal strategy, and $\tilde b^*$ is defined as in \eqref{optimal b}.  
	Since $A_N\nearrow I\times [0,\infty)$ as $N\to\infty$, then $\tau^*_N\vee\tau^*\searrow-T/2\vee\tau^*$. Additionally, 
	$$0\leq \int_{\tau^*_N\vee\tau^*}^{t}\left(\frac{1}{2}\vert \tilde b^*(s,Z_s)\vert^2 + V(Z_s)\right)^\pm ds\nearrow\int_{\tau^*_N\vee\tau^*}^{t}\left(\frac{1}{2}\vert \tilde b^*(s,Z_s)\vert^2 + V(Z_s)\right)^\pm ds$$
	where $\left(\frac{1}{2}\vert \tilde b^*(s,Z_s)\vert^2 + V(Z_s)\right)^+=\max\left(0,\frac{1}{2}\vert \tilde b^*(s,Z_s)\vert^2 + V(Z_s)\right)$ and $\left(\frac{1}{2}\vert \tilde b^*(s,Z_s)\vert^2 + V(Z_s)\right)^-=\max\left(0,-\left(\frac{1}{2}\vert \tilde b^*(s,Z_s)\vert^2 + V(Z_s)\right)\right)$. From the monotone convergence theorem, we obtain
	$$
	\lim_{n\to\infty}E_{t,x}\left[\int_{\tau^*_N\vee\tau^*}^{t} \frac{1}{2}\vert \tilde b^*(s,Z_s)\vert^2 + V(Z_s) ds\right]=E_{t,x}\left[\int_{-T/2\vee\tau^*}^{t}  \frac{1}{2}\vert \tilde b^*(s,Z_s)\vert^2 + V(Z_s)  ds\right]
	$$ 
	Furthermore, $\{S^*(Z_{\tau^*})\}_{\tau^*}$ is a uniformly integrable family of random variables, which implies that
	$$
	\lim_{N\to\infty}E_{t,x}\left[S^*(Z_{\tau^*_N\vee\tau^*})\right]=E_{t,x}\left[S^*(Z_{-T/2\vee\tau^*})\right].
	$$
%	Furthermore, since the function $v_N^*(\tau^*,Z_{\tau^*})$ is a uniformly integrable family of random variables  (because $v^*_N(\tau^*,Z_{\tau^*})$ is a uniformly integrable family of random variables) and $\lim\limits_{N\to\infty}v_N^*(t,x)=v^*(t,x)$, then
%	$$
%	\lim_{n\to\infty}E_{t,x}\left[v_N^*(\tilde\tau^*_N,Z_{\tilde\tau^*_N})\right]=E_{t,x}\left[v^*(-T/2\vee\tilde\tau^*,Z_{-T/2\vee\tilde\tau^*})\right].
%	$$
	Since this holds true for every $\tau^*\in{\cal T}_t^*$, we have
	\begin{align*}
	v^*(t,x)=&\lim_{N\to+\infty}v_N^*(t,x)\leq \lim_{N\to+\infty}U^*_N(t,x)=U^*(t,x).
	\end{align*}
	
	To prove the result, one still needs to show that $	v^*(t,x)\geq U^*(t,x).$ Let $A^N_{v^*}=\{(t,x)\in A_N:v^*(t,x)<S^*(x)\}$ and {$\tilde\tau^*_N=\sup\{-T/2\leq s\leq t\leq T/2:(s,Z_s)\notin A^N_{v^*}\}$}. Combining the first part of this proof with the results in Lemma \ref{v > h} it follows that $v^*_N$ is the unique viscosity solution of \eqref{Auxiliary boundary problem backward 2}. Additionally, in light of our discussion regarding the representation of the value function for the control problems \eqref{value function H} and \eqref{value function H*}, we obtain
	\begin{align*}
	v^*_N(t,x)&=E_{t,x}\left[\int_{\tilde\tau^*_N}^{t} \frac{1}{2}\vert \tilde b^*(s,Z_s)\vert^2 + V(Z_s)  ds+v^*_N\left(\tilde\tau^*_N,Z_{\tilde\tau^*_N}\right)\right].
	\end{align*}
	Noticing that $A_N\nearrow \{(t,x)\in [-T/2,T/2]\times\mathbb{R}^n:v^*(t,x)<S^*(x)\}$ as $N\to\infty$, then $\tilde\tau^*_N=\tau^*_v\vee\tau^*_N\searrow-T/2\vee\tau^*_v$, where $\tau^*_v=\sup\{s<t:(s,Z_s)\notin [-T/2,T/2]\times\mathbb{R}^n\}$. Therefore,  using a similar argument to the previous one, we get $v(t,x)=\lim_{N\to+\infty}v_N(t,x)$, and, consequently,
	\begin{align*}
	v(t,x)&=E_{t,x}\Bigg[\int_{-T/2\vee\tau^*_v}^{t} \frac{1}{2}\vert \tilde b^*(s,Z_s)\vert^2 + V(Z_s) ds+v^*_N\left(-T/2\vee\tau^*_v,Z_{-T/2\vee\tau^*_v}\right)\Bigg]\\
	&=E_{t,x}\left[\int_{-T/2\vee\tau^*_v}^{t}  \frac{1}{2}\vert \tilde b^*(s,Z_s)\vert^2 + V(Z_s) ds+S^*\left(Z_{-T/2\vee\tau^*_v}\right)\right]\geq U^*(t,x).
	\end{align*}
	
	From this argument, it follows that $U^*=v^*$, that is unique, $\hat{\tau}^*=\tau^*_v$ and $\hat b=\nabla v^*=\nabla U^*$. The argument to prove the result for $U$ would be very similar to the one presented here.
\end{proof}
\vskip 3mm
The usual construction of Bernstein stochastic processes relies on the solution of a forward and backward heat equations with respective positive (not necessarily integrable)  final and initial conditions. In our case, we can also construct this class of diffusion processes following a similar strategy. 

Let $\eta$ and $\eta^*$ be two functions defined as follows:
$\eta(t,x)=e^{-\frac{1}{\hbar}U(t,x)}$ and $\eta^*(t,x)=e^{-\frac{1}{\hbar}U^*(t,x)}$. One can check  that $\eta$ and $\eta^*$ satisfy, respectively, the following boundary problems
\begin{align*}
&\begin{cases}
&\min\left\{-\hbar\frac{\partial \eta}{\partial t}-\frac{\hbar^2}{2}\Delta \eta {+}V(t,x)\eta,\eta-e^{-\frac{1}{\hbar}S(x)}\right\}=0\\
&\eta\left(\frac{T}{2},x\right)=e^{-\frac{1}{\hbar}S(x)},
\end{cases}\\
&\begin{cases}
&\min\left\{\hbar\frac{\partial \eta^*}{\partial t}-\frac{\hbar^2}{2}\Delta \eta^*+V(t,x)\eta^*,\eta^*-e^{-\frac{1}{\hbar}S^*(x)}\right\}=0\\
&\eta^*\left(-\frac{T}{2},x\right)=e^{-\frac{1}{\hbar}S^*(x)}.
\end{cases}
\end{align*}
The reverse is also true in the sense that if $\eta$ and $\eta^*$ are positive functions then $v(t,x)=-\hbar(\log ~\eta)(t,x)$ and $v^*(t,x)=-\hbar(\log ~\eta^*)(t,x)$ solve Eqs  \eqref{HJB v} and \eqref{HJB v*}. Furthermore, as a consequence of Lemma \ref{v > h}, we also know that $\eta$ and $\eta^*$ satisfy, respectively, the following boundary problems:
\begin{align*}
\begin{cases}
	-\hbar\frac{\partial \eta}{\partial t}-\frac{\hbar^2}{2}\Delta \eta {+}V(t,x)\eta=0,&(t,x)\in {\cal C}\\
	\eta\left(t,x\right)=e^{-\frac{1}{\hbar}S(x)},&(t,x)\in \partial {\cal C}
\end{cases} \quad
\begin{cases}
	\hbar\frac{\partial \eta^*}{\partial t}-\frac{\hbar^2}{2}\Delta \eta^*+V(t,x)\eta^*=0,&(t,x)\in {\cal C}^*\\
	\eta^*\left(t,x\right)=e^{-\frac{1}{\hbar}S^*(x)},& (t,x)\in \partial{\cal C}^*.
\end{cases}
\end{align*}
{To end this section, we note that when $U$ and $U^*$ are smooth enough then the controlled drifts are, respectively, $\hat b(t,x)=\hbar\nabla\log(\eta(t,x))$ and $\hat b^*(t,x)= {-}\hbar\nabla\log(\eta^*(t,x))$. Additionally, $\hat{b}$ and $\hat{b}^*$ solve the boundary problems:}
\begin{align*}
&\begin{cases}
\frac{\partial \hat b}{\partial t}+(\hat b .\nabla) \hat b +\frac{\hbar^2}{2}\Delta \hat b-\nabla V(t,x)=0,& (t,x)\in {\cal C}\\
\hat b \left(t,x\right)=-\nabla S(x),\qquad (t,x)\in \partial{\cal C}
\end{cases}\\
&\begin{cases}
\frac{\partial \hat b^*}{\partial t}+(\hat b^* .\nabla) \hat b^* -\frac{\hbar^2}{2}\Delta \hat b^*-\nabla V(t,x)=0,&(t,x)\in {\cal C}^*\\
\hat b^* \left(t,x\right)=\nabla S^*(x),\qquad (t,x)\in \partial{\cal C}^*
\end{cases}.
\end{align*}

\section{Characterization of the optimal times}
In this section, we are interested in  obtaining a full characterization of the optimal stopping times $\hat{\tau}$ and $\hat\tau^*$ used in the previous section.

To characterize the distribution of these stopping times, one has to provide a characterizations of the following functions:
\begin{align}\label{function u}
q(t,x)&=P_{t,x}(\hat\tau>\tilde T),\text{ with } (t,x)\in [-T/2,T/2]\times \mathbb{R}^n\text{ and }-T/2\leq \tilde{T}\leq T/2,\\\label{function u*}
q^*(t,x)&=P_{t,x}(\hat\tau^*<\tilde T),\text{ with } (t,x)\in [-T/2,T/2]\times \mathbb{R}^n\text{ and }-T/2\leq \tilde{T}\leq T/2. 
\end{align}
One may notice that  functions $q$ and $q^*$ can be written in the following way:
\begin{align}\label{function u - expected value}
q(t,x)&=E_{t,x}\left[g({\hat\tau\wedge \tilde{T}},Z_{\hat\tau\wedge \tilde{T}})\right],\text{ with } -T/2\leq t\leq \tilde{T}< T/2\text{ and }(t,x)\in{\cal C},\\
q^*(t,x)&=E_{t,x}\left[g^*(\hat\tau^*\wedge \tilde{T},Z_{\hat\tau^*\wedge \tilde{T}})\right],\text{ with } -T/2<\tilde T\leq t\leq {T}/2\text{ and }(t,x)\in{\cal C}^*,
\end{align} 
where the function  {$g$ and $g^*$} are defined by:
$$
g(s,x)=1_{\{U(s,x)<S(x)\}}\quad\text{and}\quad g^*(s,x)=1_{\left\{U^*(s,x)<S^*(x)\right\}}.
$$
This follows from the fact that 
\begin{align}
\{\tilde\tau>\tilde T\}&=\left\{\omega\in \Omega\,:U(s,Z_s {(\omega ) } )<S(Z_s  {(\omega )}), \forall s\in[t,\tilde T]\right\},\\
\{\tilde\tau^*<\tilde T\}&=\left\{\omega\in \Omega\,:U^*(s,Z_s  {(\omega )})< {S^*(Z_s  {(\omega )})}, \forall s\in[\tilde T,t]\right\},
\end{align}
which is obvious by the definitions of $\{\tilde\tau>\tilde T\}$ and $\{\tilde\tau^*<\tilde T\}$.

The results derived below require to assume some regularity on the controls. 
\begin{assumption}\label{assumption optimal time}
The function $U:[-T/2,T/2]\times\mathbb{R}^n\to\mathbb{R}$ and $U^*:[-T/2,T/2]\times\mathbb{R}^n\to\mathbb{R}$, defined  in \eqref{optimal U} and \eqref{optimal U*}, are such that the following stochastic differential equations have a unique  strong solution 
\begin{align}\label{forward sde}
dZ_u&=-\nabla U(u,Z_u )du+\hbar^{1/2}dW_u ,\quad Z_t=x\text{ and }-\frac{T}{2}\leq t\leq u\leq \frac{T}{2}\\\label{backward sde}
d_*Z_s&=\nabla U^*(s,Z_s)d_*s+\hbar^{1/2}d_*W^*_s , \quad Z_t=x\text{ and }-\frac{T}{2}\leq s\leq t\leq \frac{T}{2}.
\end{align} 
Additionally, there are constants $K>0$ and $K^*>0$ such that
\begin{align*}
\vert \nabla U(s,x)-\nabla U(s,y)\vert\leq K\vert x-y\vert,\text{ for all } (s,x) \text{ and }(s,y)\in{\cal C}\\\ \vert\nabla U^*(s,x)-\nabla U^*(s,y)\vert\leq K^*\vert x-y\vert,\text{ for all } (s,x) \text{ and }(s,y)\in{\cal C}^*
\end{align*}
\end{assumption}

In some cases the distribution probabilities above are {easy to evaluate}, as one may see in the next lemma, stated without proof. It will be useful to introduce the following notation 
$$
\overline{t}=\sup_t\{t\in[-T/2,T/2]\,:(t,x)\in{\cal C}\}\text{ and }\underline{t}=\inf_t\{t\in[-T/2,T/2]\,:(t,x)\in{\cal C}^*\}.
$$
for some $x\in\mathbb{R}^n$.
\begin{lemma}\label{first u and u* characterization}
	Let $(t,x)\in[-T/2,T/2]\times\mathbb{R}^n$ and $-T/2\leq \tilde T\leq T/2$. Then function $q$ verifies
	\begin{itemize}
		\item[i)] $q(t,x)=1$ if $
		\left((t,x)\in{\cal S}\text{ and }t>\tilde{T}\right)\text{ or } 	\left((t,x)\in{\cal C}\text{ and }t\geq\tilde{T}\right)
		$; 
		\item[ii)] $q(t,x)=0$ if $
		\left((t,x)\in{\cal S}\text{ and }t\leq\tilde{T}\right)\text{ or } 	\left((t,x)\in{\cal C}\text{ and }\tilde{T}\geq \overline{t}\right).
		$
		\end{itemize}
	Regarding  function $q^*$ symmetric statements can be obtained:
	\begin{itemize}
	\item[iii)] $q^*(t,x)=1$ if 
	$
	\left((t,x)\in{\cal S}^*\text{ and }t<\tilde{T}\right)\text{ or } 	\left((t,x)\in{\cal C}^*\text{ and }t\leq\tilde{T}\right)
	$
	\item[iv)] $q^*(t,x)=0$ if
	$
	\left((t,x)\in{\cal S}^*\text{ and }t\geq\tilde{T}\right)\text{ or } 	\left((t,x)\in{\cal C}^*\text{ and }\tilde{T}\leq\underline{t}\right).
	$
\end{itemize}
\end{lemma}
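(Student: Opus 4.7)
The plan is to verify each of the four sufficient conditions by direct case analysis from the definitions
$\hat{\tau}=\inf\{u\in[t,T/2]:(u,Z_u)\in\mathcal{S}\}$ and $\hat{\tau}^*=\sup\{s\in[-T/2,t]:(s,Z_s)\in\mathcal{S}^*\}$ established in Theorem \ref{uniqueness result}. The essential tools are: (a) the general inequality $U\leq S$ and $U^*\leq S^*$ proved in Lemma \ref{v > h}, so that $(t,x)\in\mathcal{S}$ is equivalent to $U(t,x)=S(x)$; (b) continuity of $U$, $U^*$, $S$, $S^*$ from Proposition 3.1, combined with the continuity of the sample paths of $Z$ guaranteed by Assumption \ref{assumption optimal time}.

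First I would settle (i). If $(t,x)\in\mathcal{S}$, then $U(t,Z_t)=S(Z_t)$ at $u=t$, so $t$ already belongs to the infimand, forcing $\hat{\tau}=t$. Under the assumption $t>\tilde T$ this yields $\hat{\tau}>\tilde T$ surely, hence $q(t,x)=1$. For the second alternative, $(t,x)\in\mathcal{C}$ means $S(x)-U(t,x)>0$ strictly; joint continuity of $(u,y)\mapsto S(y)-U(u,y)$ together with the continuity of $u\mapsto Z_u$ provides a random $\epsilon(\omega)>0$ with $(u,Z_u)\in\mathcal{C}$ for all $u\in[t,t+\epsilon(\omega))$, so $\hat{\tau}>t\geq\tilde T$ almost surely, and again $q(t,x)=1$.

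Next I would handle (ii). When $(t,x)\in\mathcal{S}$ and $t\leq\tilde T$, the identity $\hat{\tau}=t\leq\tilde T$ gives $P_{t,x}(\hat{\tau}>\tilde T)=0$ immediately. The subtler case is $(t,x)\in\mathcal{C}$ with $\tilde T\geq\overline{t}$: by the definition of $\overline{t}$ the slice $\{(u,y)\in\mathcal{C}\}$ is empty for $u>\overline{t}$, hence $(u,Z_u)\in\mathcal{S}$ for every $u>\overline{t}$, which forces $\hat{\tau}\leq\overline{t}\leq\tilde T$ almost surely and $q(t,x)=0$.

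The statements (iii) and (iv) for $q^*$ follow by the same case analysis, replacing $\mathcal{S},\mathcal{C},\overline{t}$ by $\mathcal{S}^*,\mathcal{C}^*,\underline{t}$ and reversing all the time inequalities because $\hat{\tau}^*$ is the supremum defining an $\mathcal{F}_t$-stopping time of the decreasing filtration; in particular $(t,x)\in\mathcal{S}^*$ gives $\hat{\tau}^*=t$, and $\tilde T\leq\underline{t}$ forces $(s,Z_s)\in\mathcal{S}^*$ for every $s<\underline{t}$. I expect the only delicate point to be the two cases involving $\overline{t}$ and $\underline{t}$, where one must be explicit that these are global extrema of the time-projections of the continuation regions (rather than section-dependent quantities), since the process $Z$ does not remain at its starting spatial point.
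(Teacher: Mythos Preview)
The paper explicitly states this lemma \emph{without proof}, calling the corresponding probabilities ``easy to evaluate'', so there is no argument in the text to compare against. Your case analysis is correct and is exactly the elementary verification the authors had in mind: in each branch you reduce to either $\hat\tau=t$ (when $(t,x)\in\mathcal S$) or $\hat\tau>t$ (when $(t,x)\in\mathcal C$, via continuity), and then read off the value of $P_{t,x}(\hat\tau>\tilde T)$. Your closing remark about $\overline t$ and $\underline t$ is well taken: the lemma only works if these are the global time-extrema of $\mathcal C$ and $\mathcal C^*$ (i.e.\ $\overline t=\sup\{t:\exists\,x,\ (t,x)\in\mathcal C\}$), which is indeed the intended reading of the paper's definition, and your argument for the second half of (ii) uses precisely this.
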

%When $\tilde T=T/2$ and $\tilde T^*=T/2$ then we have that $P_{t,x}(\hat\tau>\tilde T)=P_{t,x}(\hat\tau^*<\tilde T^*)=1$. Moreover, if $t>\tilde T$ then $P_{t,x}(\hat\tau>\tilde T)=1$ and, similarly, when $t<\tilde T^*$, then $P_{t,x}(\hat\tau^*<\tilde T^*)=1$. Therefore, one still needs to characterize $u$ when $-T/2\leq t\leq \tilde{T}< T/2\text{ and }x\in{\cal C}$ and $u^*$ when $-T/2<\tilde T^*\leq t\leq {T}/2\text{ and }x\in{\cal C}^*$.
In the remaining cases, we will show that, under additional conditions, functions $q$ and $q^*$ are the unique continuous viscosity solutions  of the following boundary problems:
\begin{align}\label{boundary problem u}\begin{cases}
&\frac{\partial q}{\partial t}-\nabla U(t,x)\cdot\nabla q+\frac h 2\Delta q=0,\, \text{ with }-T/2\leq t< \tilde{T}< \overline{t}\text{ and }(t,x)\in{\cal C}\\
&q(\tilde T,x)=1,\,\text{ for  }( {\tilde T},x)\in {\cal C}\\
&q(\tilde t,\tilde x)=0,\,\text{ for  } (\tilde t,\tilde x)\in \{(t,x)\,:U(t,x)=S(x)\wedge -T/2\leq t<\tilde T<\overline{t}\}
\end{cases},
\end{align}
and
\begin{align}\label{boundary problem u*}\begin{cases}
&\frac{\partial q^*}{\partial t} {+}\nabla U^*(t,x)\cdot\nabla q^*-\frac{h}{2}\Delta q^*=0,\, \text{ with } \underline{t}< \tilde{T}<t\leq T/2\text{ and }(t,x)\in{\cal C}^*\\
&q^*(\tilde T,x)=1,\,\text{ for  }(\tilde T,x)\in {\cal C}^*\\
&q^*(\tilde t,\tilde x)=0, \, \text{ for  } (\tilde t,\tilde x)\in \{(t,x)\,:U^*(t,x)=S^*(x)\wedge\, \underline{t}< \tilde{T}<t\leq T/2\}
\end{cases}.
\end{align}

Note that there is an implicit relationship between $q$ and $q^*$ since $\nabla U^*(t,x)=-\nabla U(t,x)-\hbar\nabla\log\rho(t,x)$, where $\rho$ represents the density of the process.

Let us observe that, by definition of the optimal drifts $b$ and $\hat b$ in (16), $q$ and $q^*$ are respectively a ${\cal P}_t$ - martingale and a ${\cal F}_t$ - martingale of the process $Z$.

\begin{prop}\label{prop continuity}
	Let $q$ and $q^*$ be the functions defined  in \eqref{function u} and \eqref{function u*}. Then, $q$ is continuous in the domain $(t,x)\in{\cal C}$ and $-T/2\leq t<\tilde T<\overline{t}$ and $q^*$ in $(t,x)\in{\cal C}^*$ and $\underline{t}< \tilde{T}<t\leq T/2$.
\end{prop}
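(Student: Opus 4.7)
The plan is to obtain continuity by combining stability of the SDE flow in its initial data with regularity of the exit time from the open continuation region. I focus on $q$; the argument for $q^*$ is the mirror image using the backward SDE \eqref{backward sde} and the backward filtration.

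First I would invoke Assumption \ref{assumption optimal time} to represent $q$ pathwise: the forward SDE \eqref{forward sde} admits a unique strong solution $Z^{t,x}$ on a common probability space, and standard Lipschitz/Gr\"onwall estimates give, for $(t_n,x_n)\to(t,x)$ with $(t,x)\in\mathcal{C}$ and $t<\tilde T<\overline t$,
\[
E\Bigl[\sup_{s\in[t_n\vee t,\,T/2]}|Z^{t_n,x_n}_s-Z^{t,x}_s|^2\Bigr]\xrightarrow[n\to\infty]{}0,
\]
and, along a subsequence, almost sure uniform convergence of paths. Setting $\hat\tau^{t,x}:=\inf\{u\geq t:(u,Z^{t,x}_u)\notin\mathcal{C}\}\wedge T/2$, the identity $g=\mathbf{1}_{\mathcal{C}}$ reduces the representation in \eqref{function u - expected value} to $q(t,x)=P(\hat\tau^{t,x}>\tilde T)$.

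Next I would prove $\hat\tau^{t_n,x_n}\to\hat\tau^{t,x}$ almost surely. Lower semicontinuity is immediate from openness of $\mathcal{C}$: for any $s\in(t,\hat\tau^{t,x})$ the graph $\{(u,Z^{t,x}_u):u\in[t,s]\}$ lies in a compact subset of $\mathcal{C}$, and uniform path convergence forces $(u,Z^{t_n,x_n}_u)\in\mathcal{C}$ on $[t_n,s]$ for large $n$, whence $\liminf_n\hat\tau^{t_n,x_n}\geq s$. The delicate, and main, obstacle is upper semicontinuity: one must rule out that the perturbed process lingers on $\partial\mathcal{C}$ rather than crossing it. Since the diffusion coefficient $\hbar^{1/2}I$ is non-degenerate and $\nabla U$ is locally Lipschitz on $\overline{\mathcal{C}}$ by Assumption \ref{assumption optimal time}, I would invoke the classical regularity theorem for non-degenerate diffusions: every $(s,y)\in\partial\mathcal{C}$ is regular, so $P(\inf\{u>s:(u,Z^{s,y}_u)\in\mathcal{S}^{\circ}\}=s)=1$ by the Blumenthal $0$--$1$ law applied to the time-shifted process. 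The strong Markov property at $\hat\tau^{t,x}$ then yields $\limsup_n\hat\tau^{t_n,x_n}\leq\hat\tau^{t,x}$ almost surely.

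To conclude, non-degeneracy of the diffusion ensures $Z^{t,x}_{\tilde T}$ has an absolutely continuous law and in particular $P(\hat\tau^{t,x}=\tilde T)=0$; thus $\mathbf{1}_{\{\hat\tau^{t_n,x_n}>\tilde T\}}\to\mathbf{1}_{\{\hat\tau^{t,x}>\tilde T\}}$ almost surely along the subsequence, and bounded convergence delivers $q(t_n,x_n)\to q(t,x)$. Since the limit is independent of the subsequence, the original sequence converges, and $q$ is continuous on the stated domain. The proof for $q^*$ is identical upon replacing the forward flow by the backward flow \eqref{backward sde}, $\mathcal{C}$ by $\mathcal{C}^*$, and $\hat\tau$ by $\hat\tau^*$; regularity of $\partial\mathcal{C}^*$ again comes from non-degeneracy of the (time-reversed) diffusion.
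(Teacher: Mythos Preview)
Your proposal follows the same architecture as the paper: establish stability of the flow $(t,x)\mapsto Z^{t,x}$, deduce convergence of the exit time $\hat\tau^{t,x}$, rule out the event $\{\hat\tau=\tilde T\}$, and conclude by bounded convergence. The paper likewise proves two preparatory lemmas (continuity of $(s,t,x)\mapsto Z^{t,x}_s$ via moment estimates and Kolmogorov's criterion, then continuity of $(t,x)\mapsto\hat\tau_{t,x}(\omega)$) and finishes with Girsanov plus dominated convergence.

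The genuine difference lies in how the stopping-time continuity is obtained. The paper exploits that $\mathcal{C}$ is the sublevel set of the continuous function $(s,y)\mapsto U(s,y)-S(y)$: on the compact interval $[t,\hat\tau_{t,x}-\gamma]$ this function is bounded away from zero along the path, and uniform continuity of the flow transfers this buffer to nearby starting points, giving a direct two-sided $\epsilon$--$\delta$ argument for $|\hat\tau_{t,x}-\hat\tau_{t',x'}|<\gamma$. You instead split into lower and upper semicontinuity and invoke boundary regularity of $\partial\mathcal{C}$ for a non-degenerate diffusion, the Blumenthal $0$--$1$ law, and the strong Markov property. Your route is the standard one for exit times from \emph{generic} open sets, but it imports an assumption the paper does not need: regularity of every point of $\partial\mathcal{C}$ is not automatic for arbitrary open domains (think of Lebesgue-spine phenomena), and you have not verified any exterior-cone or Wiener-type condition on $\partial\mathcal{C}$. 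The paper sidesteps this entirely because it never treats $\mathcal{C}$ as an abstract domain; it uses the defining function $U-S$ directly. For the null event $\{\hat\tau=\tilde T\}$, the paper argues $\{\hat\tau=\tilde T\}\subset\{U(\tilde T,Z_{\tilde T})=S(Z_{\tilde T})\}$ and appeals to Girsanov for absolute continuity of the law of $Z_{\tilde T}$, which is slightly sharper than your one-line claim.

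In short, your outline is correct and close to the paper's, but the paper's elementary $U-S$ argument for the exit-time step is both simpler and avoids the unverified boundary-regularity hypothesis that your upper-semicontinuity step relies on.
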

To prove this proposition, we will first state some auxiliary results.

In the next result we present some estimates on the moments of the process $Z$. This allow us prove the continuity of the application $(s,t,x)\to Z^{t,x}_s(\omega)$. The result's proof will use standard arguments and, consequently, we will simply draft the proof highlighting the more relevant steps. 

\begin{lemma}
	Let $Z$ be the Bernstein process satisfying the forward and backward stochastic differential equations \eqref{forward sde} and \eqref{backward sde}. 
	Then, for each fixed $\omega\in \Omega$, the application $(s,t,x)\to Z^{t,x}_s(\omega)$ is continuous for each $(t,x)\in{\cal C}$ and $t\leq s<\hat{\tau}$.
\end{lemma}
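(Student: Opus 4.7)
My plan is to prove this by establishing $L^{2p}$-moment estimates on the differences $Z^{t,x}_s - Z^{t',x'}_{s'}$ and then invoking Kolmogorov's continuity criterion for random fields indexed by $(s,t,x)$. Under Assumption \ref{assumption optimal time}, the forward drift $b = -\nabla U$ is uniformly Lipschitz in $x$ on $\cal C$ with constant $K$, and the diffusion coefficient is the constant $\hbar^{1/2}$, so Equation \eqref{forward sde} admits a unique strong solution for each $(t,x) \in \cal C$, defined up to the exit time $\hat\tau$.

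\textbf{Step 1: pathwise $L^{2p}$ estimate.} Fix $(t,x),(t',x') \in \cal C$ with $t \leq t'$, and for each stopping time $\hat\tau_n \nearrow \hat\tau$ with $\hat\tau_n < \hat\tau$, consider both processes on $[t', \hat\tau_n]$. The Itô decomposition gives
$$
Z^{t,x}_s - Z^{t',x'}_s = \bigl(Z^{t,x}_{t'} - x'\bigr) - \int_{t'}^{s} \bigl(\nabla U(u,Z^{t,x}_u) - \nabla U(u,Z^{t',x'}_u)\bigr)\,du.
$$
Standard moment bounds for the stochastic integral $\int_t^{t'} dW_u$ combined with the boundedness of $\nabla U$ on $\cal C$ yield $E\bigl[|Z^{t,x}_{t'} - x|^{2p}\bigr] \leq C_p |t-t'|^p$. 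Applying Jensen's inequality to the drift term and Gronwall's lemma gives, for any $T' \leq \hat\tau_n$ and $p \geq 1$,
$$
E\Bigl[\sup_{s \in [t', T']} |Z^{t,x}_s - Z^{t',x'}_s|^{2p}\Bigr] \leq C_{p,T'} \bigl(|x - x'|^{2p} + |t - t'|^{p}\bigr).
$$
Adding pathwise continuity in $s$ (via the BDG inequality applied to $Z^{t,x}_s - Z^{t,x}_{s'} = \int_{s'}^{s} b(u,Z^{t,x}_u)\,du + \hbar^{1/2}(W_s - W_{s'})$), one obtains the joint estimate
$$
E\bigl[|Z^{t,x}_s - Z^{t',x'}_{s'}|^{2p}\bigr] \leq \tilde C_{p,T'}\bigl(|x-x'|^{2p} + |t-t'|^{p} + |s-s'|^{p}\bigr).
$$

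\textbf{Step 2: Kolmogorov's continuity theorem.} Choosing $p$ large enough so that $p > (n+2)/2$, the field $(s,t,x) \mapsto Z^{t,x}_s$ has a Hölder-continuous version on the cylinder $\{(s,t,x) : (t,x) \in \cal C,\, t \leq s \leq \hat\tau_n\}$. Hence for almost every $\omega$, the map $(s,t,x) \mapsto Z^{t,x}_s(\omega)$ is continuous there. Letting $n \to \infty$ exhausts the set $\{(s,t,x) : (t,x) \in \cal C,\, t \leq s < \hat\tau\}$ and yields the claim.

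\textbf{Main obstacle.} The delicate point is the localization to $s < \hat\tau$, since the Lipschitz bound on $\nabla U$ is guaranteed only on $\cal C$ and the exit time $\hat\tau$ depends on $(t,x)$. To make the Gronwall argument rigorous one must work with a common stopping time bounding both $\hat\tau^{t,x}$ and $\hat\tau^{t',x'}$ from below as $(t',x') \to (t,x)$ inside $\cal C$; the continuity of $U$ and $S$ (cf.\ Proposition 3.1) together with openness of $\cal C$ ensures this can be done on a small neighborhood, after which the sequence $\hat\tau_n \nearrow \hat\tau$ handles the passage to the limit.
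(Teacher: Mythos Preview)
Your proposal is correct and follows essentially the same route as the paper: $L^p$ moment estimates on the differences $Z^{t,x}_s - Z^{t',x'}_{s'}$ obtained via H\"older/Gronwall arguments using the Lipschitz bound on $\nabla U$ from Assumption~\ref{assumption optimal time}, followed by Kolmogorov's continuity criterion. The paper decomposes the difference into three pieces (varying $s$, $x$, $t$ separately) rather than handling $(t,x)$ jointly as you do, and it does not discuss the localization to $s<\hat\tau$ as explicitly as you do, but otherwise the arguments coincide.
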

\begin{proof}
	We start the proof by noticing that
	\begin{align*}
	E\left[\left\vert Z^{t,x}_s-Z^{t',x'}_{s'}\right\vert^p\right]&\leq 3^{p-1}\left(E\left[\left\vert Z^{t,x}_s-Z^{t,x}_{s'}\right\vert^p\right]+E\left[\left\vert Z^{t,x}_{s'}-Z^{t,x'}_{s'}\right\vert^p\right]+E\left[\left\vert Z^{t,x'}_{s'}-Z^{t',x'}_{s'}\right\vert^p\right]\right).
	\end{align*}
	Firstly we will prove that
	$$
	E\left[\left\vert Z^{t,x}_s-Z^{t,x}_{s'}\right\vert^p\right]\leq 2^{p-1}(s'-s)^\frac{p}{2}\left(K^pT^\frac{p}{2}+\left(\frac{p(p-1)}{2}\right)^{\frac{p}{2}}{\hbar}^p\right).
	$$
	To prove this estimate, one can notice that
	\begin{align*}
	&E\left[\left\vert Z^{t,x}_s-Z^{t,x}_{s'}\right\vert^p\right]\leq 2^{p-1}E\left[\left\vert\int_s^{s'}\nabla U(u,Z^{t,x}_u)du\right\vert^p+\left\vert\int_s^{s'}\hbar dW_u\right\vert^p\right]\\
	&\leq 2^{p-1}\left(L^p(s'-s)^p+\left(\frac{p(p-1)}{2}\right)^{\frac{p}{2}}{\hbar}^p(s'-s)^\frac{p}{2}\right)\leq 2^{p-1}(s'-s)^\frac{p}{2}\left(L^pT^\frac{p}{2}+\left(\frac{p(p-1)}{2}\right)^{\frac{p}{2}}{\hbar}^p\right),
	\end{align*}
	the first inequality following from H\"older's inequality, Theorem 1.7.1 in Mao \cite{mao2007stochastic} and the fact that $U$ is Lipschitz in $x$ uniformly in $t$, which implies that $\vert \nabla U(s,x)\vert $ is bounded by a constant $L>0$, for every $(s,t)\in {\cal C}$.
	
	To find the estimate
	\begin{align}\label{estimate 2}
	E\left[\left\vert Z^{t,x}_{s'}-Z^{t,x'}_{s'}\right\vert^p\right]\leq 2^{p-1}(x-x')^pe^{K^p(t-s')^{p}}\leq 2^{p-1}(x-x')^pe^{\frac{(2KT)^p}{2}}
	\end{align}
	one may notice that
	\begin{align*}
	&E\left[\left\vert Z^{t,x}_{s'}-Z^{t,x'}_{s'}\right\vert^p\right]\leq 2^{p-1}\left((x-x')^p+E\left[\left\vert\int_{s'}^{t}\nabla U(u,Z^{t,x}_u)-\nabla U(u,Z^{t,x'}_u)du\right\vert^p\right]\right)\\
	&\leq 2^{p-1}\left((x-x')^p+K^p(t-s')^{p-1}E\left[\int_{s'}^{t}\left\vert Z^{t,x}_u-Z^{t,x'}_u\right\vert^pdu\right]\right),
	\end{align*}
	where  H\"older's inequality has been used as well as Theorem 1.7.1 in Mao \cite{mao2007stochastic} and Assumption \ref{assumption optimal time}. By   Gronwall's inequality, this shows  estimate \eqref{estimate 2}.
	
	Finally, along the same lines of the previous estimates, we may prove that 
	\begin{equation}\label{estimate 3}
	E\left[\left\vert Z^{t,x'}_{s'}-Z^{t',x'}_{s'}\right\vert^p\right]\leq 2^{p-1}L^p(t'-t)e^{\frac{(2TK)^p}{2}}.
	\end{equation}
	
	The result follows from Kolmogorov's Lemma (see, for instance Theorem 72, Chapter IV in Protter \cite{Protter}).
\end{proof}
\vskip 3mm

\begin{lemma}\label{lemma continuities}
	Let $\hat\tau$ and $\hat\tau^*$ be the stopping times defined  in Theorem \ref{uniqueness result}. Then, for  fixed $\omega\in\Omega$, $(t,x)\to\tau_{t,x}(\omega)$ is a continuous application in the domain $(t,x)\in{\cal C}$ and $-T/2\leq t<\tilde T<\overline{t}$ and $(t,x)\to\tau^*_{t,x}(\omega)$ is a continuous application in $(t,x)\in{\cal C}^*$ and $\underline{t}< \tilde{T}<t\leq T/2$.
\end{lemma}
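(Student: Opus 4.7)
The plan is to establish continuity of $(t,x)\mapsto \tau_{t,x}(\omega)$ for $\omega$ in a set of full probability by proving lower and upper semi-continuity separately; the backward case for $\tau^*_{t,x}$ is symmetric after reversing time. The main input is the preceding lemma, giving joint continuity of $(s,t,x)\mapsto Z^{t,x}_s(\omega)$ in the domain of interest, together with openness of the continuation region $\mathcal{C}=\{U<S\}$ ensured by the continuity of $U$ and $S$.

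For lower semi-continuity, fix $(t,x)\in\mathcal{C}$ with $t<\tilde T<\bar t$, let $(t_n,x_n)\to(t,x)$, and pick $s$ with $t<s<\tau_{t,x}(\omega)$. The trajectory $\{(u,Z^{t,x}_u(\omega)):u\in[t,s]\}$ is a compact subset of $\mathcal{C}$, so the continuous function $\phi(u,y):=S(y)-U(u,y)$ achieves a strictly positive minimum $\delta>0$ there. By the preceding lemma, $Z^{t_n,x_n}_\cdot(\omega)\to Z^{t,x}_\cdot(\omega)$ uniformly on $[t\vee t_n,s]$, and by uniform continuity of $\phi$ on a compact neighborhood one has $\phi(u,Z^{t_n,x_n}_u(\omega))>\delta/2$ on $[t\vee t_n,s]$ for all $n$ large, so $\tau_{t_n,x_n}(\omega)\geq s$. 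Letting $s\nearrow\tau_{t,x}(\omega)$ gives $\liminf_n \tau_{t_n,x_n}(\omega)\geq\tau_{t,x}(\omega)$.

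For upper semi-continuity, fix $s\in(\tau_{t,x}(\omega),\bar t)$. The subtlety is that Lemma \ref{v > h} yields $U\le S$ globally, so the stopping region is the level set $\mathcal{S}=\{U=S\}$ and the process hits $\partial\mathcal{C}$ at $\tau_{t,x}(\omega)$ without crossing into a sublevel set where $U>S$. The needed additional fact, valid for $P$-a.e. $\omega$, is that $u\mapsto (u,Z^{t,x}_u(\omega))$ enters the topological interior $\operatorname{int}(\mathcal{S})$ in every right neighborhood of $\tau_{t,x}(\omega)$. Granted this, choose $u\in(\tau_{t,x}(\omega),s)$ and a space-time ball $B_\rho(u,Z^{t,x}_u(\omega))\subset\operatorname{int}(\mathcal{S})$; continuity in the starting point then places $(u,Z^{t_n,x_n}_u(\omega))$ in the same ball for large $n$, forcing $\tau_{t_n,x_n}(\omega)\leq u<s$, whence $\limsup_n \tau_{t_n,x_n}(\omega)\leq\tau_{t,x}(\omega)$.

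The main obstacle is this boundary-regularity step: one must show that the exit point $(\tau_{t,x}(\omega),Z^{t,x}_{\tau_{t,x}(\omega)}(\omega))$ is a regular boundary point of $\mathcal{C}$ for the diffusion \eqref{forward sde}, so that the process does not linger on $\partial\mathcal{C}=\{U=S\}$ after the hitting time. This is where non-degeneracy enters: since $\hbar>0$ and $\nabla U$ is Lipschitz by Assumption \ref{assumption optimal time}, the law of $Z^{t,x}$ is locally equivalent to that of a Brownian motion, so the Blumenthal $0$-$1$ law applied at $\tau_{t,x}(\omega)$ (via the strong Markov property and an exterior ball/cone condition for $\mathcal{S}$ at generic boundary points, inherited from the free-boundary structure of \eqref{HJB v}) rules out that the trajectory stays on the free boundary on any right neighborhood of $\tau_{t,x}(\omega)$. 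Modulo this regularity input, combining the two semi-continuity bounds yields continuity of $(t,x)\mapsto\tau_{t,x}(\omega)$ on the stated domain, and the argument for $\tau^*_{t,x}$ follows by applying the same reasoning to the time-reversed process governed by \eqref{backward sde}.
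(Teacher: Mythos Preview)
Your lower semi-continuity argument is essentially the paper's: on the compact piece $[t,s]$ of the trajectory the continuous function $S-U$ attains a strictly positive minimum, and uniform convergence of the flow $(t,x)\mapsto Z^{t,x}_\cdot$ (the preceding lemma) transfers this bound to nearby starting data.

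For upper semi-continuity you take a genuinely different route from the paper. The paper never invokes $\operatorname{int}(\mathcal S)$, Blumenthal's $0$--$1$ law, or any regularity of the free boundary. It simply reuses the lower semi-continuity estimate with the roles of $(t,x)$ and $(t',x')$ interchanged: on $[t',\hat\tau_{t',x'}-\gamma]$ one has $U(s,Z^{t',x'}_s)-S(Z^{t',x'}_s)<-\zeta$, and the same uniform-continuity bound on $(s,y)\mapsto U(s,y)-S(y)$ composed with the flow then gives $U(s,Z^{t,x}_s)-S(Z^{t,x}_s)<-\zeta/2$ on that interval, forcing $\hat\tau_{t,x}>\hat\tau_{t',x'}-\gamma$. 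Together with the first inequality this yields $|\hat\tau_{t,x}-\hat\tau_{t',x'}|<\gamma$. So the paper obtains both bounds from a single symmetric $\epsilon$--$\delta$ argument, with no appeal to what happens \emph{after} the hitting time.

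Your argument, by contrast, carries a gap you yourself flag. The phrase ``modulo this regularity input'' covers the assertion that $u\mapsto(u,Z^{t,x}_u)$ enters $\operatorname{int}(\mathcal S)$ in every right neighbourhood of $\hat\tau_{t,x}$, which you reduce to an exterior ball/cone condition on $\partial\mathcal C$. No such geometric property of the free boundary is established anywhere in the paper, and it does not follow from the viscosity framework of Section~\ref{Dynamic programming principle and viscosity solutions} or from Assumption~\ref{assumption optimal time}; for generic obstacle problems the free boundary need not satisfy it. Without this input your upper semi-continuity step is incomplete, whereas the paper's symmetric argument is self-contained and sidesteps the issue.
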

\begin{proof}
	To prove that the application $(t,x)\to\tau_{t,x}(\omega)$ is continuous for a fixed $\omega\in\Omega$, we notice that, due to the continuity of $x\to U(s,x)-S(x)$ and $(t,x)\to X_s^{t,x}$, for any $s\in[-T/2,T/2]$, we get that for all $\epsilon>0$ there exists $\delta>0$ such that 
	\begin{align}\label{continuity of U-S}
	\vert t-t'\vert+\vert x-x'\vert<\delta\Rightarrow\vert U(s,X_s^{t,x})-S(X_s^{t,x})-U(s,X_s^{t',x'})+S(X_s^{t',x'})\vert<\epsilon.
	\end{align}
	According to the definition of  $\hat\tau$, we get that for all $\gamma>0$ there is $\zeta>0$ such that
	\begin{align}\label{inequality for tau t,x}
	&U(s,X_s^{t,x})-S(X_s^{t,x})<-\zeta,\text{ for all }s\in[t,\hat\tau_{t,x}-\gamma]\\\label{inequality for tau t',x'}
	&U(s,X_s^{t',x'})-S(X_s^{t',x'})<-\zeta,\text{ for all }s\in[t,\hat\tau_{t',x'}-\gamma].
	\end{align}
	Thus, combining \eqref{continuity of U-S} with \eqref{inequality for tau t,x}  and choosing $\epsilon=\frac \zeta 2$, we have
	$$
	U(s,X_s^{t',x'})-S(X_s^{t',x'})-\frac \zeta 2<U(s,X_s^{t,x})-S(X_s^{t,x})<-\zeta.
	$$
	Therefore, $U(s,X_s^{t',x'})-S(X_s^{t',x'})<-\frac \zeta 2$ for all $s\in[t',\hat\tau^{t,x}-\gamma]$ which implies that $\tau^{t,x}-\gamma<\tau^{t',x'}$. By combining \eqref{continuity of U-S} with \eqref{inequality for tau t',x'} and using a similar argument, we conclude that $\tau^{t',x'}-\gamma<\tau^{t,x}$. Therefore, we have proved that for all $\gamma>0$ there is $\delta>0$ such that 
	\begin{align*}
	\vert t-t'\vert+\vert x-x'\vert<\delta\Rightarrow\vert \tau_{t,x}-\tau_{t',x'}\vert<\gamma,
	\end{align*}
	as required.
\end{proof}
\vskip 3mm

\begin{proof}[Proof of Proposition \ref{prop continuity}]
	In what follows, we prove that $(t,x)\to q(t,x)$ is a continuous function in the domain $-T/2\leq t< \tilde{T}< 
	\overline{t}\text{ and }(t,x)\in{\cal C}$. A similar argument may be established for the remaining case.
	
Fix $\omega\in\Omega$ such that $\hat\tau_{(t',x')}>\tilde{T}$ or $\hat\tau_{(t',x')}<\tilde{T}$. Due to the continuity of the functions $(t,x)\to\hat\tau_{t,x}$ and $(s,t,x)\to Z_s^{t,x}$, one has { for $g$ of Equation \eqref{function u - expected value},}
	$$
	\lim_{(t,x)\to(t',x')}g(\hat\tau_{t,x}\wedge\tilde{T},Z_{\hat\tau_{t,x}\wedge\tilde{T}}^{t,x})=g(\hat\tau_{t',x'}\wedge\tilde{T},Z_{\hat\tau_{t',x'}\wedge\tilde{T}}^{t',x'}).
	$$
Additionally, since the drifts of the process $Z$ are bounded, as noticed in the proof of Lemma \ref{lemma continuities}, Girsanov theorem holds true, and consequently the law of $Z$ is absolutely continuous with respect to law of the Brownian motion. Therefore, combining this fact with the continuity of $U$ and $S$, we get that
	$$
	P(\hat\tau_{(t',x')}=\tilde{T})\leq P(U(\tilde{T},Z_{\tilde{T}})=S(Z_{\tilde{T}}))=0.
	$$
	Since by definition $0\leq g(t,x)\leq 1$ for all $(t,x)\in[-T/2,T/2]\times\mathbb{R}^n$, 
	$$
		\lim_{(t,x)\to(t',x')}E\left[g(\hat\tau_{t,x}\wedge\tilde{T},Z_{\hat\tau_{t,x}\wedge\tilde{T}}^{t,x})\right]=E\left[g(\hat\tau_{t',x'}\wedge\tilde{T},Z_{\hat\tau_{t',x'}\wedge\tilde{T}}^{t',x'})\right],
	$$
	 follows from the dominated convergence theorem.
\end{proof}
\vskip 3mm

%In fact, one can recall that $$\hat\tau=\inf\{s\geq t\,:U(s,Z_s)\geq S(Z_s)\} \text{ and } \hat\tau^*=\sup\{s\in]-T/2,t]\,:U^*(s,Z_s)\geq S^*(Z_s)\},$$ 
%which implies that
%$$
%\{\tilde\tau>\tilde T\}=\left\{\omega\in \Omega\,:U(s,X_s)<S(X_s), \forall s\in[t,\tilde T]\right\},
%$$
%and 
%$$
%\{\tilde\tau^*<\tilde T^*\}=\left\{\omega\in \Omega\,:U^*(s,X_s)<S(X_s), \forall s\in[\tilde T^*,t]\right\},
%$$
%and, therefore, the result follows immediately. 

%In the next proposition, we will show that, indeed, the functions $u$ and $u^*$ satisfy the following boundary problems:
%\begin{align}
%&\frac{\partial u}{\partial t}+b^*(t,x)\cdot\nabla u+\frac h 2\Delta u=0, \quad \text{with }-T/2\leq t< \tilde{T}< T/2\text{ and }x\in{\cal C}\\
%&u(\tilde T,x)=1,\text{ with }x\in {\cal C}\\
%&u(\tilde t,\tilde x)=0, \text{ for all } (\tilde t,\tilde x)\in \{(t,x)\,:U(t,x)=S(x)\wedge -T/2\leq t<\tilde T<T/2\}.
%\end{align}
%and
%\begin{align}
%&\frac{\partial u^*}{\partial t}+b^*(t,x)\cdot\nabla u^*-\frac{h}{2}\Delta u^*=0 \quad \text{with }-T/2\leq t< \tilde{T}< T/2\text{ and }x\in{\cal C}\\
%&u^*(\tilde T^*,x)=1,\text{ for all }x\in {\cal C}^*\\
%&u^*(\tilde t,\tilde x)=0, \text{ for all } (\tilde t,\tilde x)\in \{(t,x)\,:U^*(t,x)=S^*(x)\wedge -T/2<\tilde T^*<t\leq T/2\}.
%\end{align}
\begin{teo}\label{theorem optimal times}
Let $q$ and $q^*$ be the functions defined  in \eqref{function u} and \eqref{function u*}. Then, $q$ is the unique continuous viscosity solution of the boundary problem \eqref{boundary problem u} in the domain $(t,x)\in{\cal C}$ with $-T/2\leq t<\tilde{T}<\overline{t}$ and $q^*$ is the unique continuous viscosity solution to the boundary problem \eqref{boundary problem u*} in $(t,x)\in{\cal C}^*$ with $\underline{t}<\tilde{T}^*<t\leq T/2$. Outside of this domain domain, function $q$ and $q^*$ are characterized according to Lemma \ref{first u and u* characterization}. 
\end{teo}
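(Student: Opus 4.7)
The plan is to exploit the representation $q(t,x)=E_{t,x}[g(\hat\tau\wedge\tilde T,Z_{\hat\tau\wedge\tilde T})]$ from \eqref{function u - expected value} to show that $q$ is a $\mathcal{P}_t$-martingale along stopped trajectories of $Z$, deduce the viscosity subsolution and supersolution inequalities for the PDE in \eqref{boundary problem u} via a standard Dynkin-formula argument against smooth test functions, and then invoke a linear parabolic comparison principle for uniqueness. The case of $q^*$ is handled by the symmetric scheme: replace the forward SDE \eqref{forward sde} by the backward SDE \eqref{backward sde}, and replace the forward generator by the backward generator $\partial_t+\nabla U^*\cdot\nabla-\frac{\hbar}{2}\Delta$ appearing in \eqref{boundary problem u*}.

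For existence, fix $(t,x)\in\mathcal{C}$ with $t<\tilde T<\overline t$ and, for $h>0$ sufficiently small, set $\theta_h=(t+h)\wedge\hat\tau_{t,x}$. The strong Markov property of $Z$ at the $\mathcal{P}_t$-stopping time $\theta_h$ together with the tower property yields
\begin{align*}
q(t,x)=E_{t,x}\bigl[q(\theta_h,Z_{\theta_h})\bigr].
\end{align*}
To extract the PDE, take $\psi\in C^2$ with $(t,x)$ a local maximum of $q-\psi$ and $q(t,x)=\psi(t,x)$; the local inequality $q\leq\psi$ combined with the previous identity gives $\psi(t,x)\leq E_{t,x}[\psi(\theta_h,Z_{\theta_h})]$, and applying Dynkin's formula to $\psi$ along the controlled forward dynamics \eqref{forward sde}, dividing by $h$, and letting $h\downarrow 0$ produces the subsolution inequality $\partial_t\psi-\nabla U\cdot\nabla\psi+\frac{\hbar}{2}\Delta\psi\geq 0$ at $(t,x)$. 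The supersolution inequality follows symmetrically at a local minimum. The terminal condition $q(\tilde T,x)=1$ for $(\tilde T,x)\in\mathcal{C}$ is immediate from $g(\tilde T,x)=1$, and the lateral condition $q=0$ on $\{U=S\}\cap\{t<\tilde T\}$ follows from $\hat\tau_{t,x}=t$ together with $g(t,x)=0$ there. Continuity of $q$ on the relevant closure is Proposition \ref{prop continuity}, so $q$ is a continuous viscosity solution of \eqref{boundary problem u}.

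For uniqueness, I would appeal to the standard comparison principle for viscosity sub- and supersolutions of linear second-order parabolic equations with bounded Lipschitz coefficients, as in Crandall, Ishii and Lions \cite{crandall1992user}. The required coefficient regularity is supplied by Assumption \ref{assumption optimal time}, and the a priori bound $0\leq q\leq 1$ removes any issue with growth at spatial infinity. The main technical obstacle is that $\mathcal{C}$ need not be bounded in the spatial variable, so comparison cannot be applied on $\mathcal{C}$ itself in a single step; instead I would exhaust $\mathcal{C}$ by an increasing family of bounded open sets $A_N\nearrow\mathcal{C}$, apply the comparison principle on each $A_N$ against approximating boundary data, and then pass to the limit via the dominated convergence theorem together with the continuity properties of the exit times from $A_N$ established in Lemma \ref{lemma continuities} — exactly the approximation scheme already used in the proof of Theorem \ref{uniqueness result}. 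The argument for $q^*$ is verbatim, time-reversed, so both unique characterizations are obtained simultaneously.
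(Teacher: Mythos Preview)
Your proposal is correct and follows essentially the same route as the paper: the existence part uses the strong Markov property to write $q(t,x)=E_{t,x}[q(\theta,Z_\theta)]$ for a small stopping time $\theta\leq\hat\tau$, then tests against $\psi\in C^2$ via Dynkin's formula and passes to the limit to obtain the sub- and supersolution inequalities, with the boundary data read off from Lemma~\ref{first u and u* characterization}; the uniqueness part exhausts the (possibly unbounded) domain by bounded sets $A_N$, invokes the Crandall--Ishii--Lions comparison principle on each $A_N$, and passes to the limit by dominated convergence exactly as in Theorem~\ref{uniqueness result}. The only minor inaccuracy is your appeal to Lemma~\ref{lemma continuities} for ``continuity of the exit times from $A_N$'': that lemma concerns continuity of $(t,x)\mapsto\hat\tau_{t,x}$, whereas the limit step here only needs the monotone convergence $\tau_N\nearrow\hat\tau_{t,x}$ together with the continuity of $q$ from Proposition~\ref{prop continuity}, which is precisely what the paper uses.
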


Although the proof of Theorem \ref{theorem optimal times} relies on the same type of arguments used through out Sections \ref{Dynamic programming principle and viscosity solutions} and \ref{A uniqueness result}, we will shortly prove the result.  
\begin{proof}
		We will only consider the function $q$; regarding $q^*$, the statement can be proved along the same lines. We split the proof in two steps: (i) existence of solution to \eqref{boundary problem u} and (ii) uniqueness  of solution to \eqref{boundary problem u}. 

\textit{Proof of (i):}
Let $(t,x)\in{\cal C}$ and $-T/2\leq t<\tilde{T}<\overline{t}$ and $\psi\in C^2([-T/2,T/2]\times\mathbb{R}^n)$ be such that $(t,x)$ is a local maximizer of $q-\psi$ and $q(t,x)-\psi(t,x)=0$. Let $\tau\in{\cal T}_t$ be a stopping time satisfying $\tau\leq \hat\tau$, then, by the strong Markov property { and \eqref{function u - expected value}}  we get
\begin{align}
\psi(t,x)&=q(t,x)=E_{t,x}\left[E_{\tau,X_\tau}\left[g({\hat\tau\wedge \tilde{T}},Z_{\hat\tau\wedge \tilde{T}})\right]\right]=E_{t,x}\left[q(\tau,X_\tau)\right]\\
&\leq E_{t,x}\left[\psi(\tau,X_\tau)\right]=\psi(t,x)+E_{t,x}\left[\int_t^\tau{\cal L}\psi(s,X_s)ds\right].
\end{align}
Dividing the inequality $0\leq E_{t,x}\left[\int_t^\tau{\cal L}\psi(s,X_s)ds\right]$ by $E_{t,x}[\tau]$ and letting $\tau\searrow t$ we obtain that
$$
{\cal L}\psi(t,x)\geq 0,
$$
that allows us to conclude that $u$ is a viscosity subsolution to the PDE  \eqref{boundary problem u}.
To prove the viscosity supersolution a similar argument may be used, namely, pic $\psi\in C^2([-T/2,T/2]\times\mathbb{R}^n)$ and $(t,x)\in{\cal C}$ with $-T/2\leq t<\tilde{T}<\overline{t}$ such that $(t,x)$ is a local minimizer of $u-\psi$ and $q(t,x)-\psi(t,x)=0$. Then,
\begin{align}
0\geq E_{t,x}\left[\int_t^\tau{\cal L}\psi(s,X_s)ds\right],
\end{align}
for $\tau\in{\cal T}_t$ with $\tau\leq \hat\tau$. Dividing the last expression by $E_{t,x}[\tau]$ and letting $\tau\searrow t$ we obtain that
$$
{\cal L}\psi(t,x)\leq 0.
$$
Therefore, $q$ is a viscosity supersolution to the PDE  \eqref{boundary problem u}.

Finally, to prove that $q$ is a viscosity solution of the boundary problem \eqref{boundary problem u}, one can see that, in light of Lemma \ref{first u and u* characterization}, $q(\tilde{T},x)=1$ for all $(T,x)\in{\cal C}$. Additionally, it is straightforward that, if $(\tilde t,\tilde x)\in \{(t,x)\,:U(t,x)=S(x)\wedge -T/2\leq t<\tilde T< \overline{t}\}$, then $q(\tilde{t},\tilde{x})=P(\hat{\tau}>\tilde{T})=0$.

\textit{Proof of (ii):} Let $A_N$ be an open bounded set such that $$A_N\subset\{(t,x)\in{\cal C}\,:-T/2\leq t<\tilde T<\overline{t}\}\text{ and } A_N\nearrow\{(t,x)\in{\cal C}\,:-T/2\leq t<\tilde T<\overline{t}\}$$ and $\tau_N=\inf\{-T/2\leq t\leq u\,:Z_u \notin A_N\}$. Additionally, let $q_N$ be given by the function $q_N(t,x)=q(t,x)$ for all $(t,x)\in \overline{A}_N$, where $q$ is a viscosity solution to \eqref{boundary problem u}. By construction, $q_N$ is a viscosity solution of the boundary problem
\begin{align}\label{auxiliar boundary problem for u}
{\cal L}v=0\quad\text{with}\quad v=q_N.
\end{align}
Additionally, by using the comparison principle for bounded domains, presented by Crandall, Ishii and Lions \cite{crandall1992user}, one may conclude that $q_N$ is the unique viscosity solution of \eqref{auxiliar boundary problem for u}. 

Along the same lines as the first part of this proof, we have 
$$
q_N(t,x)=E\left[q_N\left(\tau_{N}\wedge\tilde{T},Z_{\tau_{N}\wedge\tilde{T}}^{t,x}\right)\right].
$$

Since $0\leq q_N(t,x)\leq 1$ for all $(t,x)\in \{(t,x)\in{\cal C}\,:-T/2\leq t<\tilde T<T/2\}$, the dominated convergence theorem allows us to conclude that
$$
q(t,x)=\lim_{N\to +\infty}q_N(t,x)=\lim_{N\to +\infty}E\left[q_N\left(\hat\tau_{N}\wedge\tilde{T},Z_{\hat\tau_{N}\wedge\tilde{T}}^{t,x}\right)\right]=E\left[q\left(\hat\tau_{t,x}\wedge\tilde{T},Z_{\hat\tau_{t,x}\wedge\tilde{T}}^{t,x}\right)\right],
$$
the last equality following from the continuity of $q$ and the fact that $\tau_N\nearrow\hat\tau_{t,x}$. Therefore, it is straightforward that $q$ is given by \eqref{function u}.
\end{proof}
\vskip 3mm

\section{Example}
In this section, we intend to solve the stochastic control problems \eqref{optimal U} and \eqref{optimal U*} when $n=1$, $V(x)=0$, $S(x)=\vert x\vert$ and $S^*(x)=\log(\vert x\vert+1)$, for all $x\in\mathbb{R}$. Additionally, we will compare the value functions and the Bernstein process obtained with our procedure with the ones  we would obtain solving the simpler (deterministic time interval) problem
\begin{align}\label{forward H}
\tilde H(t,x)=\inf_{b\in{\cal U}}E_{t,x}\left[\int_{t}^{T/2}(b(s,Z_s))^2  ds+\vert Z_{\tau\wedge T/2}\vert\right]
\end{align}
where the process $Z$  {solves}  the ${\cal P}_t$ {- SDE} :
\begin{align}
dZ_t&=b(t,Z_t)dt+h^{1/2}dW_t.
\end{align}
and 
\begin{align}\label{backward H}
\tilde H^*(t,x)=\inf_{b^*\in{\cal U}^*}E_{t,x}\left[\int_{-T/2}^{t} (b^*(s,Z_s))^2  ds+\log(\vert Z_{-T/2\vee\delta}\vert+1)\right]
\end{align}
where the process $Z$  {solves}  the ${\cal P}_t$ {- SDE} :
$$
d_*Z_t=b^*(t,Z_t)d_*t+h^{1/2}d_*W^*_t.
$$

The existence and uniqueness of solution for the free boundary problems below is shown for instance in Cannon \cite{cannon1984one}.

\subsection{The forward stochastic control problem}
\label{The forward stochastic control problem}

As seen in the previous sections, to find the value function $U$ associated with the forward control problem, one has to solve the free-boundary problem 
$$
\begin{cases}
-\frac{\partial U}{\partial t}(t,x)+\frac{1}{2}\left(\frac{\partial U}{\partial x}(t,x)\right)^2-\frac{\hbar }{2}\frac{\partial^2 U}{\partial x^2}(t,x)=0, &(t,x)\in{\cal C}\\
U(t,x)=\vert x\vert,&(t,x)\in{\cal S}\\
U(T/2,x)=\vert x\vert,& x\in\mathbb{R}
\end{cases}.
$$ 
Therefore, one of the first steps to solve the control problem is to guess the shape of the continuation and stopping regions. For this particular case, given the shape of the terminal cost $S(x)=\vert x\vert$ and the fact that $V(x)=0$,  it follows from $J_{t,x}(Z,\tau,b)\geq 0$, for all $\tau\in{\cal T}_t$ and  $b\in \cal U$,  that the forward stopping region ${\cal S}\supset\{0\}$ and $U(t,0)=0$ for all $t\in[-T/2,T/2]$. Additionally, one can check that, for $x\in\mathbb{R}\setminus \{0\}$,
$$
-\frac{\partial S}{\partial t}(x)+\frac{1}{2}\left(\frac{\partial S}{\partial x}(x)\right)^2-\frac{h}{2}\frac{\partial^2 S}{\partial x^2}(x)=\frac{1}{2}>0,
$$ 
meaning that $U(t,x)\neq S(x)$ for all $x\neq 0$. In other words, the value function $U$ can be described as follows: 
\begin{itemize}
	\item[(i)] for $x<0$, $U$ is the unique classical solution to the boundary problem
\begin{equation*}\label{BP1}
\begin{cases}
-\frac{\partial U}{\partial t}(t,x)+\frac{1}{2}\left(\frac{\partial U}{\partial x}(t,x)\right)^2-\frac{\hbar }{2}\frac{\partial^2 U}{\partial x^2}(t,x)=0,&(t,x)\in[-T/2,T/2)\times(-\infty,0)\\
U(t,0)=0,& t\in[-T/2,T/2)\\
U(T/2,x)=-x,& x\in(-\infty,0)
\end{cases}.
\end{equation*}
\item[(ii)] for $x>0$, $U$ the unique classical solution to the boundary problem 
\begin{equation*}\label{BP2}
\begin{cases}
-\frac{\partial U}{\partial t}(t,x)+\frac{1}{2}\left(\frac{\partial U}{\partial x}(t,x)\right)^2-\frac{\hbar }{2}\frac{\partial^2 U}{\partial x^2}(t,x)=0,& (t,x)\in[-T/2,T/2)\times(0,\infty)\\
U(t,0)=0,& t\in[-T/2,T/2)\\
U(T/2,x)=x,& x\in(0,\infty)
\end{cases}.
\end{equation*}
\end{itemize}
An analytic expression for $U$ can be found by using the change of variable presented at the end of Section \ref{A uniqueness result}. Indeed, if $U(t,x)=-\hbar \log(\eta(t,x))$, then,  for $x<0$, $\eta$ is unique solution of the boundary and final problem
\begin{equation*}\label{BPHE1}
\begin{cases}
\frac{\partial \eta}{\partial t}(t,x)+\frac{\hbar }{2}\frac{\partial^2 \eta}{\partial x^2}(t,x)=0,&(t,x)\in[-T/2,T/2)\times(-\infty,0)\\
\eta(t,0)=1,& t\in[-T/2,T/2)\\
\eta(T/2,x)=e^{\frac{1}{\hbar }x},& x\in(-\infty,0)
\end{cases}.
\end{equation*}
and, when $x>0$, $\eta$ is the unique solution of
\begin{equation*}\label{BPHE2<}
\begin{cases}
\frac{\partial \eta}{\partial t}(t,x)+\frac{\hbar }{2}\frac{\partial^2 \eta}{\partial x^2}(t,x)=0,& (t,x)\in[-T/2,T/2)\times(0,\infty)\\
\eta(t,0)=1,&  t\in[-T/2,T/2)\\
\eta(T/2,x)=e^{-\frac{1}{\hbar }x},& x\in(0,\infty)
\end{cases}.
\end{equation*}
It is a matter of calculations to see that, for $t\in[-T/2,T/2]$
\begin{equation}
\eta(t,x)=\begin{cases}1+\frac{1}{\sqrt{2\pi \hbar (T/2-t)}}\int_{-\infty}^0\left(e^{-\frac{(x-y)^2}{2\hbar (T/2-t)}}-e^{-\frac{(x+y)^2}{2\hbar (T/2-t)}}\right)\left(e^{\frac{y}{\hbar }}-1\right)dy, & x<0\\
1+\frac{1}{\sqrt{2\pi \hbar (T/2-t)}}\int_{0}^\infty\left(e^{-\frac{(x-y)^2}{2\hbar (T/2-t)}}-e^{-\frac{(x+y)^2}{2\hbar (T/2-t)}}\right)\left(e^{-\frac{y}{\hbar }}-1\right)dy, & x> 0
\end{cases}.
\end{equation}

In this case, the optimal strategy, $(\hat b,\hat \tau)$, is the following
\begin{align*}
\hat b(t,x)=\begin{cases}
\frac{e^{\frac{T/2-t}{2 h}}}{\eta(t,x)\sqrt{2 \pi h (T/2-t)}}\left({e^{\frac{x}{h}}} \int_{-\infty }^{-x} {e^{-\frac{\left(y-(T/2-t)\right)^2}{2h (T/2-t)}}} \, dy+{e^{-\frac{ x}{h}}} \int_{-\infty }^x {e^{- \frac{\left(y-(T/2-t)\right)^2}{2h (T/2-t)}}}\, dy\right),&x<0\\
\frac{-e^{\frac{T/2-t}{2 h}}}{\eta(t,x)\sqrt{2 \pi  h (T/2-t)}} \left(e^{\frac x h} \int_{-\infty }^{-x} e^{- \frac{\left(y-(T/2-t)\right)^2}{2h (T/2-t)}} \, dy+e^{-\frac{x}{h}} \int_{-\infty }^x e^{- \frac{\left(y-(T/2-t)\right)^2}{2h (T/2-t)}} \, dy\right)&x>0
\end{cases}
\end{align*}
and
$$
\hat\tau=\inf\{u\geq t:\, Z_u =0\}. 
$$
Since the process is optimally stopped once it reaches the level $0$, and the terminal condition is the absolute value of the current state of the process, we are, indeed, constructing two ``symmetric'' versions of the same stochastic process: one when the initial condition is negative and a second one when the initial condition is positive.

\subsection{The backward stochastic control problem}
\label{The backward stochastic control problem}
The value function $U^*$ can be obtained as a solution to the free-boundary problem
$$
\begin{cases}
\frac{\partial U^*}{\partial t}(t,x)+\frac{1}{2}\left(\frac{\partial U^*}{\partial x}(t,x)\right)^2-\frac{\hbar }{2}\frac{\partial^2 U^*}{\partial x^2}(t,x)=0, &(t,x)\in{\cal C}^*\\
U^*(t,x)=\log(\vert x\vert+1),&(t,x)\in{\cal S}^*\\
U^*(-T/2,x)=\log(\vert x\vert+1),& x\in\mathbb{R}
\end{cases}.
$$
A similar argument to the  one used in the previous case allows us to get that ${\cal S}^*\supset\{0\}$ and $U^*(t,0)=0$ for all $t\in[-T/2,T/2]$. Moreover, it is a matter of calculations to see that, when $x\in\mathbb{R}\setminus\{0\}$,
$$
\frac{\partial S^*}{\partial t}(x)+\frac{1}{2}\left(\frac{\partial S^*}{\partial x}(x)\right)^2-\frac{\hbar }{2}\frac{\partial^2 S^*}{\partial x^2}(x)=\frac{\hbar +1}{2(1+\vert x\vert )^2}>0.
$$
Therefore, the backward stopping region is ${\cal S}^*=\{0\}$, which means that the following statements are true:
\begin{itemize}
\item[(i)] for $x<0$, $U^*$ is the unique solution to the boundary problem 
\begin{equation*}\label{FBP1}
\begin{cases}
\frac{\partial U^*}{\partial t}(t,x)+\frac{1}{2}\left(\frac{\partial U^*}{\partial x}(t,x)\right)^2-\frac{\hbar }{2}\frac{\partial^2 U^*}{\partial x^2}(t,x)=0,&(t,x)\in(-T/2,T/2]\times(-\infty,0)\\
U^*(t,0)=0,& t\in(-T/2,T/2]\\
U^*(-T/2,x)=\log(-x+1),& x\in(-\infty,0)
\end{cases},
\end{equation*}
\item[(ii)] when $x>0$, $U^*$ is the unique solution to the boundary problem
\begin{equation*}\label{FBP2}
\begin{cases}
\frac{\partial U^*}{\partial t}(t,x)+\frac{1}{2}\left(\frac{\partial U^*}{\partial x}(t,x)\right)^2-\frac{\hbar }{2}\frac{\partial^2 U^*}{\partial x^2}(t,x)=0,& (t,x)\in(-T/2,T/2]\times(0,\infty)\\
U(t,0)=0,& t\in(-T/2,T/2]\\
U(-T/2,x)=\log(x+1),& x\in(0,\infty)
\end{cases}.
\end{equation*}
\end{itemize}
Using the transformation $\eta^*(t,x)=e^{-\frac{1}{h}U^*(t,x)}$, one can obtain two equivalent boundary problems. If $x<0$, then $\eta$ is the unique solution to
\begin{equation}\label{FBPHE1}
\begin{cases}
\frac{\partial \eta^*}{\partial t}(t,x)-\frac{\hbar }{2}\frac{\partial^2 \eta^*}{\partial x^2}(t,x)=0,&(t,x)\in(-T/2,T/2]\times(-\infty,0)\\
\eta^*(t,0)=1,& t\in(-T/2,T/2]\\
\eta^*(-T/2,x)=(1-x)^{-\frac{1}{\hbar }},& x\in(-\infty,0)
\end{cases},
\end{equation}
and, when $x>0$, $\eta$ is the unique solution to
\begin{equation}\label{FBPHE2<}
\begin{cases}
\frac{\partial \eta^*}{\partial t}(t,x)-\frac{\hbar }{2}\frac{\partial^2 \eta^*}{\partial x^2}(t,x)=0,& (t,x)\in(-T/2,T/2]\times(0,\infty)\\
\eta^*(t,0)=1,&  t\in(-T/2,T/2]\\
\eta^*(-T/2,x)=(1+x)^{-\frac{1}{\hbar }},& x\in(0,\infty)
\end{cases}.
\end{equation}
Therefore, one  obtains
\begin{equation}
\eta^*(t,x)=\begin{cases}1+\frac{1}{\sqrt{2\pi \hbar (t+T/2)}}\int_{-\infty}^0\left(e^{-\frac{(x-y)^2}{2\hbar (t+T/2)}}-e^{-\frac{(x+y)^2}{2\hbar (t+T/2)}}\right)\left((1-y)^{-\frac{1}{\hbar }}-1\right)dy, & x<0\\
1+\frac{1}{\sqrt{2\pi \hbar (t+T/2)}}\int_{0}^\infty\left(e^{-\frac{(x-y)^2}{2\hbar (t+T/2)}}-e^{-\frac{(x+y)^2}{2\hbar (t+T/2)}}\right)\left((y+1)^{-\frac{1}{\hbar }}-1\right)dy, & x> 0
\end{cases}.
\end{equation}

The optimal strategy for the backward control problem is given by
\begin{align*}
	\hat{b}^*(t,x)&=\begin{cases}\frac{\int_{-\infty }^0  \left({(x-y) e^{-\frac{(x-y)^2}{2 \hbar (t+T/2)}}}-{(x+y) e^{-\frac{(x+y)^2}{2 \hbar (t+T/2)}}}\right) \left((1-y)^{-1/\hbar }-1\right)\, dy}{\eta^*(t,x)(t+T/2)^\frac{3}{2}\sqrt{2\pi \hbar}},& x<0\\
	\frac{\int_0^{\infty } \left({(x-y) e^{-\frac{(x-y)^2}{2 \hbar (t+T)}}-(x+y) e^{-\frac{(x+y)^2}{2 \hbar (t+T)}}}\right) \left((y+1)^{-1/\hbar }-1\right)\, dy}{\eta^*(t,x)(t+T/2)^\frac{3}{2}\sqrt{2\pi \hbar}},& x>0
	\end{cases}\\
	\hat{\tau}^*&=\sup\{-T/2\leq s\leq t:\,Z_s=0\}.
\end{align*}
Given the structure of the terminal cost, the process is stopped once it attains the level zero. Therefore, the process is well defined in the space-time domain $\mathbb{R}\setminus\{0\}
\times [\hat{\tau}^*,\hat{\tau}]$.

\subsection{Classical control problems}
 In this section, we will construct a Bernstein stochastic process by solving the control problems \eqref{forward H} and \eqref{backward H} and we will compare it with the optimal process $Z$ constructed above.

To solve the optimal control problems \eqref{forward H} and \eqref{backward H}, one may use the standard theory (see for instance Fleming and Soner \cite{fleming2006controlled})). This means that the value function of stochastic control problems  \eqref{forward H} and \eqref{forward H} are the unique classical solutions of the final and initial  boundary problems
\begin{equation}\label{BP_Classical_forward}
\begin{cases}
-\frac{\partial \tilde{H}}{\partial t}(t,x)+\frac{1}{2}\left(\frac{\partial \tilde{H}}{\partial x}(t,x)\right)^2-\frac{\hbar }{2}\frac{\partial^2 \tilde{H}}{\partial x^2}(t,x)=0,&(t,x)\in[-T/2,T/2)\times\mathbb{R}\\
\tilde{H}(T/2,x)=\vert x\vert ,& x\in\mathbb{R}
\end{cases}.
\end{equation}
and
\begin{equation}\label{BP_Classical_backward}
\begin{cases}
\frac{\partial \tilde{H}^*}{\partial t}(t,x)+\frac{1}{2}\left(\frac{\partial \tilde{H}^*}{\partial x}(t,x)\right)^2-\frac{\hbar }{2}\frac{\partial^2 \tilde{H}^*}{\partial x^2}(t,x)=0,&(t,x)\in(-T/2,T/2]\times\mathbb{R}\\
\tilde{H}^*(-T/2,x)=\log(-x+1),& x\in\mathbb{R}
\end{cases},
\end{equation}

Noticing that there is a unique positive solution of the adjoint boundary problems
\begin{equation}
\begin{cases}
\frac{\partial \eta}{\partial t}(t,x)+\frac{\hbar }{2}\frac{\partial^2 \eta}{\partial x^2}(t,x)=0,&(t,x)\in[-T/2,T/2)\times\mathbb{R}\\
\eta(T/2,x)=e^{-\frac{1}{\hbar }\vert x\vert },& x\in\mathbb{R}
\end{cases},
\end{equation}
and
\begin{equation}
\begin{cases}
\frac{\partial \eta^*}{\partial t}(t,x)-\frac{\hbar }{2}\frac{\partial^2 \eta^*}{\partial x^2}(t,x)=0,&(t,x)\in(-T/2,T/2]\times\mathbb{R}\\
\eta^*(-T/2,x)=(1+\vert x\vert)^{-\frac{1}{\hbar }},& x\in\mathbb{R}
\end{cases},
\end{equation}
namely,
\begin{align}
\eta(t,x)&=\frac{1}{\sqrt{2\pi \hbar (T/2-t)}}\int_{-\infty}^\infty e^{-\frac{(x-y)^2}{2\hbar (T/2-t)}}e^{-\frac{\vert y\vert}{\hbar }}dy,\\
\eta^*(t,x)&=\frac{1}{\sqrt{2\pi \hbar (t+T/2)}}\int_{-\infty}^\infty e^{-\frac{(x-y)^2}{2\hbar (t+T/2)}}(1+\vert y\vert)^{-\frac{1}{\hbar}}dy
\end{align}
the solution to \eqref{BP_Classical_forward} and \eqref{BP_Classical_backward}, can be found by using the change of variable $\tilde{H}(t,x)=-\hbar \log(\eta(t,x))$ and $\tilde{H}^*(t,x)=-\hbar \log(\eta^*(t,x))$.

The forward and backward optimal strategies are, in this case, given by the control functions 
\begin{align*}
\hat b(t,x)&= 2e^{\frac{T/2 -t-2 x}{2 \hbar }}\frac{ \int_{-\infty }^x e^{ -\frac{1}{2} \frac{(y-(T/2-t))^2}{{\hbar (T/2-t)}}} \, dy}{\eta(t,x) \sqrt{2 \pi  \hbar (T/2-t)}  }-1\\
\hat b^*(t,x)&=-\frac{\int_{-\infty }^{\infty}  {(x-y) e^{-\frac{1}{2}\frac{(x-y)^2}{\hbar (t+T/2)}}} \left((1-y)^{-1/\hbar }-1\right)\, dy}{\eta^*(t,x)(t+T/2)^\frac{3}{2}\sqrt{2\pi \hbar }},
\end{align*}
for every $(t,x)\in(-T/2,T/2)\times\mathbb{R}\setminus\{0\}$. They are different from the ones obtained in Sections \ref{The forward stochastic control problem} and \ref{The backward stochastic control problem}. 
 
By comparing the process $Z$ constructed at Sections \ref{The forward stochastic control problem} and \ref{The backward stochastic control problem} with the one obtained in the present section, we  conclude that the presence of random times in the action functionals changes effectively the optimal stochastic process. Indeed, the Bernstein process constructed in a random interval of time is different from  the one constructed in a deterministic interval of time, although they maximize the same action functionals.

\section{Acknowledgments}
This research was partly funded by FCT (Funda{\c c}\~ao para a Ci\^encia e Tecnologia, Portugal), through the project UID/MAT/00208/2019 and grant ``Schr\"{o}dinger's problem and Optimal Transport: a multidisciplinary perspective'', with reference PTDC/MAT-STA/28812/2017. 

Carlos Oliveira was partially supported by the Project CEMAPRE/REM - UIDB/05069/2020 - financed by FCT/MCTES through national funds.

\bibliographystyle{plain}
\bibliography{myrefs}
\end{document}